\documentclass[11pt]{article}

\usepackage[T1]{fontenc}
\usepackage[utf8]{inputenc}
\usepackage[margin=1in]{geometry}
\usepackage{amsmath,amssymb,amsthm,mathtools}
\usepackage{microtype}
\usepackage{enumitem}
\usepackage[colorlinks=true,citecolor=blue,linkcolor=blue,urlcolor=blue]{hyperref}
\usepackage[nameinlink,capitalize,noabbrev]{cleveref}

\newtheorem{theorem}{Theorem}[section]
\newtheorem{lemma}[theorem]{Lemma}
\newtheorem{proposition}[theorem]{Proposition}
\newtheorem{corollary}[theorem]{Corollary}
\theoremstyle{remark}
\newtheorem{remark}[theorem]{Remark}

\crefname{theorem}{Theorem}{Theorems}
\crefname{lemma}{Lemma}{Lemmas}
\crefname{proposition}{Proposition}{Propositions}
\crefname{corollary}{Corollary}{Corollaries}
\crefname{remark}{Remark}{Remarks}

\newcommand{\C}{\mathbb C}
\newcommand{\R}{\mathbb R}
\newcommand{\E}{\mathbb E}
\newcommand{\Prob}{\mathbb P}
\newcommand{\haf}{\operatorname{haf}}
\newcommand{\CN}{\mathcal{CN}}
\newcommand{\Lt}{\mathrm{Lt}}
\newcommand{\norm}[1]{\left\lVert #1\right\rVert}
\newcommand{\abs}[1]{\left\lvert #1\right\rvert}

\newcommand{\dd}{\,\mathrm d}

\title{Anticoncentration of Complex Gaussian Hafnians}
\author{Priyanshu Pant\\
Indian Institute of Technology Indore, India\\
\texttt{priyanshupant03@gmail.com}}
\date{September 2026}

\begin{document}
\maketitle

\begin{abstract}
Let \(G_{2n}\) be a complex symmetric random matrix whose entries above the diagonal are independent standard circular complex Gaussians, and let \(H_n=\operatorname{haf}(G_{2n})\). We prove the uniform shifted anticoncentration bound
\[
\Pr\!\left(
\left|
\frac{H_n}{\sqrt{(2n-1)!!}}-z
\right|
\le \varepsilon
\right)
\le
2\sqrt{\frac n\pi}\,\varepsilon^2
\]
for every \(z\in\mathbb C\) and \(\varepsilon>0\). This establishes a local anticoncentration property that supports hardness arguments for quantum advantage in Gaussian boson sampling.
\end{abstract}

\section{Introduction}\label{sec:intro}

Boson sampling and Gaussian boson sampling are prominent proposals for
exhibiting quantum computational advantage with restricted photonic devices.
In the original boson-sampling model, output probabilities are governed by
matrix permanents.  Aaronson and Arkhipov showed that the passage from exact
to approximate sampling requires more than the worst-case hardness of the
permanent: their reduction isolates, among other ingredients, an
anticoncentration property for permanents of Gaussian random matrices
\cite{AaronsonArkhipov2013}.  Informally, one needs typical random output
weights not to become so small that an additive approximation contains no
useful relative information.  This led to the Permanent Anti-Concentration
Conjecture and, more generally, to the study of the distribution of random
matrix polynomials at their natural second-moment scale.

Gaussian boson sampling (GBS) replaces single-photon inputs by Gaussian
squeezed states.  Hamilton et al. showed that collision-free GBS output
probabilities are expressed through squared hafnians of complex symmetric
matrices \cite{HamiltonEtAl2017}; see also the detailed development in
\cite{KruseEtAl2019}.  The hafnian is the perfect-matching polynomial for a
symmetric matrix.  If $A=(a_{ij})$ is symmetric of order $2n$, then
\begin{equation}\label{eq:haf-def}
 \haf(A)=\sum_{M\in\mathcal M_{2n}}
             \prod_{\{i,j\}\in M}a_{ij},
\end{equation}
where $\mathcal M_{2n}$ denotes the set of perfect matchings of
$\{1,\ldots,2n\}$.  Thus the hafnian plays for general perfect matchings the
role played by the permanent for bipartite perfect matchings.

The complexity-theoretic evidence for approximate GBS has subsequently been
organized around the same broad ingredients as in ordinary boson sampling:
hiding random instances inside the physical ensemble, average-case hardness
of the relevant output probabilities, and statistical control of the output
distribution; see, for example, \cite{DeshpandeEtAl2022}.  Recent published
work of Ehrenberg et al. studies the latter question through moments of GBS
output probabilities and reveals a transition between lack of
anticoncentration and weak anticoncentration as the number of squeezed modes
varies with the observed photon number \cite{EhrenbergEtAl2025PRL,EhrenbergEtAl2025PRA}.
These results are moment based.  They do not by themselves give local control
of how much probability a random hafnian can place in a small neighborhood of
a prescribed complex value.

The question studied here is this local, shifted form of anticoncentration.
Let $G_{2n}$ be a complex symmetric random matrix whose entries above the
diagonal are independent standard circular complex Gaussians
$\CN(0,1)$.  The diagonal is irrelevant for the hafnian and may be chosen
arbitrarily.  Set
\begin{equation}\label{eq:basic-defs}
 H_n=\haf(G_{2n}),\qquad
 M_n=(2n-1)!!,\qquad
 W_n=\frac{H_n}{\sqrt{M_n}}.
\end{equation}
The normalization is the root-mean-square scale, since
$\E|H_n|^2=M_n$; see \cref{lem:second-moment}.  We ask for a bound, uniform in
$z\in\C$, on
\[
       \Prob\bigl( |W_n-z|\leq\varepsilon\bigr).
\]
Because the ambient space is two-dimensional, the natural local scale is
$\varepsilon^2$.

Our first main result gives such a bound with a polynomial loss in $n$.

\begin{theorem}[Complex Gaussian hafnian anticoncentration]\label{thm:main}
For every $n\geq1$, the random variable $W_n$ has a radial, radially
nonincreasing density $p_n$ on $\C$.  Moreover,
\begin{equation}\label{eq:density-main}
 \norm{p_n}_{\infty}=p_n(0)
 \leq \frac{1}{\pi}B_n,
 \qquad
 B_n:=\frac{(2n-1)!!}{2^{n-1}(n-1)!}
      =\frac{2n}{4^n}\binom{2n}{n}.
\end{equation}
Consequently, for every $z\in\C$ and every $\varepsilon>0$,
\begin{equation}\label{eq:main-small-ball}
 \Prob\bigl(\abs{W_n-z}\leq\varepsilon\bigr)
 \leq B_n\varepsilon^2
 \leq 2\sqrt{\frac n\pi}\,\varepsilon^2.
\end{equation}
\end{theorem}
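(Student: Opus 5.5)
The plan is to condition on all entries except those in one row, so that $H_n$ becomes a complex Gaussian whose variance is itself random. Expanding the hafnian \eqref{eq:haf-def} along vertex $1$ gives
\[
H_n=\sum_{j=2}^{2n} g_{1j}\,h_j,\qquad h_j:=\haf\bigl(G_{2n}\text{ with rows/columns }1,j\text{ removed}\bigr),
\]
and the $h_j$ do not depend on $(g_{1j})_j$. Let $S_n:=\sum_{j}|h_j|^2$. Conditionally on $\mathcal F=\sigma(g_{ik}:i,k\ge2)$, $H_n$ is exactly $\CN(0,S_n)$, provided $S_n>0$ almost surely. This holds because $h_2$ is a nonzero polynomial in Gaussian entries, so its zero set is null.

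This conditional structure gives the first assertion at once. $W_n$ is a mixture of centered circular Gaussians, each with density $\frac{M_n}{\pi S_n}e^{-M_n|w|^2/S_n}$. Averaging over $S_n$ yields a radial, radially nonincreasing density with
\[
p_n(0)=\frac{M_n}{\pi}\,\E\bigl[S_n^{-1}\bigr].
\]
Since a radial nonincreasing density is maximized at the origin, $\norm{p_n}_\infty=p_n(0)$. The small-ball bound \eqref{eq:main-small-ball} then follows from $\Prob(|W_n-z|\le\varepsilon)\le \pi\varepsilon^2\norm{p_n}_\infty$. The final numeric inequality follows from $\binom{2n}{n}\le 4^n/\sqrt{\pi n}$, which gives $B_n\le 2n/\sqrt{\pi n}=2\sqrt{n/\pi}$. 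Everything therefore reduces to
\[
\E\bigl[S_n^{-1}\bigr]\le \frac{1}{2^{n-1}(n-1)!}=\prod_{k=1}^{n-1}\frac{1}{2k}.
\]

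The product form suggests an induction $\E[S_n^{-1}]\le \E[S_{n-1}^{-1}]/(2n-2)$, with $S_1\equiv1$. To run it, I drop the term $j=2$ and expand each $h_j$, $j\ge3$, along vertex $2$: $h_j=\sum_{k\ge3,k\ne j}g_{2k}A_{jk}$. Here $A_{jk}$ is the hafnian of $G_{2n}$ with $\{1,2,j,k\}$ removed. The matrix $A$ is symmetric with zero diagonal and is independent of the vector $g=(g_{2k})_{k\ge3}\sim\CN(0,I_{2n-2})$. Hence
\[
S_n\ge g^*Qg,\qquad Q=A^*A\succeq0 .
\]
The model computation is that for $Q=\lambda I_m$ one has $\E[1/(g^*Qg)]=1/(\lambda(m-1))$, which is the inverse moment of a $\mathrm{Gamma}(m)$ variable. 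Here $m=2n-2$, giving a loss of $1/(2n-3)$ rather than $1/(2n-2)$. The precise constant must therefore come from a sharper accounting, for instance by keeping the dropped term $|h_2|^2$. One then has to compare the eigenvalues of $Q$ with the row sums $\sum_k|A_{jk}|^2$, which are copies of $S_{n-1}$ for the submatrix obtained by deleting vertex $j$.

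I expect this step to be the main obstacle. $E[1/\cdot]$ is convex, so Jensen points the wrong way, and a single $|h_j|^2$ has infinite inverse moment. What I would try first is the integral identity
\[
\E\bigl[1/X\bigr]=\int_0^\infty \E\bigl[e^{-tX}\bigr]\dd t .
\]
Conditionally on $A$, the Gaussian integral gives $\E[e^{-t\,g^*Qg}]=\det(I+tQ)^{-1}$, and I would lower bound $\det(I+tQ)$ by $\prod_j(1+tq_j)$ for suitable diagonal-type quantities $q_j$, so that the independence of row $2$ produces the factor $1/(2n-2)$. If the exact constant resists, I would settle for the induction with $1/(2n-3)$ and check whether that product still yields a polynomial bound such as $2\sqrt{n/\pi}$ up to constants.
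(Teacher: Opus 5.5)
\textbf{There is a genuine gap: the key inequality is left unproved.}

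Your reduction is correct and is the paper's. The one-vertex expansion makes $H_n$ conditionally $\CN(0,S_n)$ with $S_n=\norm{C_n}^2$. The mixture then gives a radial nonincreasing density with $p_n(0)=\frac{M_n}{\pi}\E S_n^{-1}$, and the small-ball and Wallis steps are the same as in the paper. But the whole content of the theorem is the bound $\E S_n^{-1}\le 1/(2^{n-1}(n-1)!)$, and you leave it open.

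The route you sketch cannot close it, because conditionally on $A$ every comparison you propose points the wrong way.
\begin{itemize}
\item Hadamard's inequality gives $\det(I+tQ)\le\prod_k(1+tQ_{kk})$. That is a \emph{lower} bound on $\E[e^{-tg^*Qg}\mid A]$, which is the wrong direction for bounding an inverse moment from above.
\item The map $\lambda\mapsto\E[1/\sum_k\lambda_k|g_k|^2]$ is convex and symmetric, hence Schur-convex. The spectrum of $Q$ majorizes its diagonal (Schur--Horn). Therefore $\E[1/(g^*Qg)\mid A]\ge\E[1/\sum_kQ_{kk}|g_k|^2\mid A]$, where $Q_{kk}=\sum_j|A_{jk}|^2\stackrel{d}{=}S_{n-1}$ are exactly your row sums.
\end{itemize}
So the anisotropy of $Q$ can only hurt. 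No estimate made pointwise in $A$ can drive the induction; the average over the unexposed entries has to be used in an essential way.

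Separately, discarding $h_2$ caps you at the factor $1/(2n-3)$ even when $Q$ is scalar. Your fallback would give $\E S_n^{-1}\le1/(2n-3)!!$, hence $p_n(0)\le(2n-1)/\pi$ and a small-ball bound $(2n-1)\varepsilon^2$. That is linear in $n$, so even if it were proved it would not give the stated $B_n\varepsilon^2\le2\sqrt{n/\pi}\,\varepsilon^2$. And it is not proved: you only computed the case $Q=\lambda I$.

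\textbf{The missing idea} is to linearize the full norm rather than expand a second row. Write $e^{-s\norm{C_n}^2}=\E_g e^{i(2\sqrt s\,g,C_n)_\R}$ with $g\sim\CN(0,I_{2n-1})$ independent. Nothing is discarded, and the Gamma shape is $2n-1$, which gives exactly $1/(2n-2)$.

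The comparison you need is then the compression inequality $|\Phi_n(z)|\le\Phi_n(\norm z e_1)$ for the characteristic function of the cofactor vector. The paper proves it as follows:
\begin{itemize}
\item expose two vertices, so that $\sum_jw_j(C_n)_j=x^{\mathsf T}Ty+w_2x^{\mathsf T}\ell+w_1y^{\mathsf T}\ell+aq$;
\item average the shared edge $a$, which leaves the nonnegative weight $e^{-|q|^2/4}$;
\item use the exact Gaussian interpolation $|F(\sqrt\theta u,\sqrt{1-\theta}v)|=F(u,0)^\theta F(0,v)^{1-\theta}$;
\item apply H\"older over $D$, which is exactly where the average over the unexposed matrix enters.
\end{itemize}
Since $\Phi_n(re_1)=\E e^{-r^2\norm{C_{n-1}}^2/4}$, this gives $\E e^{-s\norm{C_n}^2}\le\E e^{-s\norm g^2\norm{C_{n-1}}^2}$. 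Integrating in $s$ yields $\E S_n^{-1}\le\E S_{n-1}^{-1}/(2n-2)$, which is the recurrence you need.
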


The proof begins with a one-vertex expansion.  If $B_{2n-1}$ is an odd
symmetric Gaussian matrix and
\[
  (C_n)_j=\haf((B_{2n-1})_{-j}),\qquad 1\le j\le 2n-1,
\]
is its hafnian cofactor vector, then adding one new Gaussian vertex gives
\begin{equation}\label{eq:intro-one-vertex}
           H_n=g^{\mathsf T}C_n,
           \qquad g\sim\CN(0,I_{2n-1}),
\end{equation}
with $g$ independent of $C_n$.  Hence, conditional on $C_n$,
\[
             H_n\mid C_n\sim\CN(0,\norm{C_n}^2).
\]
The density at the origin is therefore
\begin{equation}\label{eq:intro-inverse}
        p_{H_n}(0)=\frac1\pi\E\frac1{\norm{C_n}^2}.
\end{equation}
Thus the small-ball problem becomes an inverse-norm problem for an odd
hafnian cofactor vector.

The structural step is a two-vertex expansion.  After exposing two vertices,
a linear combination of the cofactors has the form
\begin{equation}\label{eq:intro-two-vertex}
  x^{\mathsf T}Ty+w_2x^{\mathsf T}\ell+w_1y^{\mathsf T}\ell+aq.
\end{equation}
Here $x$ and $y$ are independent Gaussian vectors, while $a$ is the
independent Gaussian weight of the edge joining the two exposed vertices.
The final term is precisely the new feature caused by symmetry.  Averaging
$a$ turns it into the nonnegative factor
\begin{equation}\label{eq:intro-positive}
        \E_a e^{i\operatorname{Re}(aq)}=e^{-|q|^2/4}.
\end{equation}
The remaining Gaussian integral admits an exact two-coordinate geometric
interpolation.  Since the shared-edge factor is nonnegative, H\"older's
inequality preserves the interpolation after averaging over the unexposed
matrix.  Iterating the resulting compression shows that, at fixed Fourier
energy, one coordinate is extremal.  Applied to the inverse moment
\[
       A_n:=\E\norm{C_n}^{-2},
\]
this gives the simple recurrence
\begin{equation}\label{eq:intro-recurrence}
        A_{n+1}\leq\frac{A_n}{2n}.
\end{equation}
Since $A_1=1$, one obtains
$A_n\leq [2^{n-1}(n-1)!]^{-1}$, from which \cref{thm:main} follows.

The compression theorem contains more information than the single inverse
moment used above.  Our second main result gives an explicit stochastic
comparison.  For nonnegative random variables $U,V$, write
$U\leq_{\Lt}V$ if
\[
          \E e^{-sU}\geq\E e^{-sV}\qquad(s\geq0).
\]

\begin{theorem}[Product-Gamma Laplace domination]\label{thm:intro-lt}
Let $\gamma_r\sim\operatorname{Gamma}(r,1)$ be independent random variables.
Then, for every $n\geq1$,
\begin{equation}\label{eq:intro-cofactor-lt}
  \prod_{j=2}^{n}\gamma_{2j-1}
  \leq_{\Lt}\norm{C_n}^2,
\end{equation}
where the empty product for $n=1$ equals $1$.  Consequently,
\begin{equation}\label{eq:intro-haf-lt}
  \prod_{j=1}^{n}\gamma_{2j-1}
  \leq_{\Lt}|H_n|^2.
\end{equation}
\end{theorem}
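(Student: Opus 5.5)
We prove \eqref{eq:intro-cofactor-lt} by induction on $n$ and then derive \eqref{eq:intro-haf-lt} from the one-vertex expansion. For $n=1$, $B_1$ is a $1\times1$ matrix and its only cofactor is the empty hafnian, so $\norm{C_1}^2=1$; the statement holds with equality.

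For the derivation of \eqref{eq:intro-haf-lt}, recall that $H_n\mid C_n\sim\CN(0,\norm{C_n}^2)$ by \eqref{eq:intro-one-vertex}. Hence $|H_n|^2\overset{d}{=}\norm{C_n}^2\,\gamma_1$, where $\gamma_1\sim\operatorname{Gamma}(1,1)$ is an exponential variable independent of $C_n$. The order $\le_{\Lt}$ is preserved under multiplication by an independent nonnegative factor. Indeed, conditioning on $\gamma_1=t$ gives
\[
\E e^{-s t U}\ge \E e^{-stV}
\]
for each $t\ge0$, and we then integrate over $t$. So \eqref{eq:intro-cofactor-lt} yields \eqref{eq:intro-haf-lt}.

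For the induction step, the plan is to show
\[
\norm{C_n}^2\,\gamma_{2n+1}\le_{\Lt}\norm{C_{n+1}}^2,
\]
with $\gamma_{2n+1}$ independent of $C_n$. Combined with the induction hypothesis and the product-stability just noted, this gives \eqref{eq:intro-cofactor-lt} for $n+1$. To prove this step, write the Laplace transform of a squared norm of a complex vector as a Gaussian Fourier average:
\[
\E e^{-s\norm{C}^2}=\E_C\,\E_\xi\, e^{i\operatorname{Re}\langle \xi, C\rangle}, \qquad \xi\sim\CN(0,4sI).
\]
This is the ``Fourier energy'' representation. Expose the two new vertices of $B_{2n+1}$. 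The linear form $\langle\xi,C_{n+1}\rangle$ then takes the shape \eqref{eq:intro-two-vertex}: $x$ and $y$ are the Gaussian edge vectors from the two new vertices to the old $2n-1$, $a$ is the connecting edge, $T$ collects the two-deleted cofactors of the old matrix, $\ell$ collects one-deleted cofactors, and $w_1,w_2$ are the components of $\xi$ on the new coordinates. Averaging over $a$ produces the nonnegative factor $e^{-|q|^2/4}$ of \eqref{eq:intro-positive}. Averaging over $x$ and $y$ is an explicit Gaussian integral.

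The compression theorem, which I take to be the two-coordinate interpolation plus H\"older iteration described after \eqref{eq:intro-positive}, says that at fixed $\norm{\xi}^2$ the averaged characteristic function is minimized when $\xi$ is concentrated on a single coordinate. The argument uses that all the weights are nonnegative after the $a$-average, which is what lets H\"older survive the average over the unexposed matrix. Choose the single coordinate to be one of the old cofactor positions $j$. The relevant entry of $C_{n+1}$ is then
\[
\haf\bigl((B_{2n+1})_{-j}\bigr)=g^{\mathsf T}C'_n,
\]
a one-vertex expansion of a fresh instance. This reduces the one-coordinate bound to the $n$-level object.

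The main obstacle, and where I expect real work, is producing the exact factor $\gamma_{2n+1}$ rather than a cruder exponential. The one-coordinate reduction by itself only yields a factor $|g_k|^2\sim\operatorname{Gamma}(1,1)$. The remaining gain must come from the radial averaging of $\xi$ over $\C^{2n+1}$: its squared norm is $\operatorname{Gamma}(2n+1)$-distributed, and compression transfers this entire radial law onto one coordinate, so $\norm{\xi}^2$ enters multiplicatively. Concretely, I would verify that
\[
\E e^{-s\norm{C_{n+1}}^2}\le \E\exp\bigl(-s\,\norm{C_n}^2\gamma_{2n+1}\bigr),
\]
by tracking that the Gaussian in $\xi$ has isotropic law, decomposes as radius times direction, and that compression is applied at each fixed radius. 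Checking that the radius and the coordinate variable combine into exactly $\operatorname{Gamma}(2n+1,1)$, rather than a Gamma-convolution off by one index, is the delicate bookkeeping step. A sanity check is that it must reproduce $A_{n+1}\le A_n/(2n)$, using $\E\gamma_{2n+1}^{-1}=1/(2n)$.
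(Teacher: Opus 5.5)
Your proposal is correct, and it closes the induction by a different route from the paper's.

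\textbf{How the two routes differ.} The paper uses the induction hypothesis inside the Fourier domain. It bounds $\operatorname{Re}\Phi_{n+1}(z)\le\Phi_{n+1}(\norm z e_1)\le\E\exp(-\tfrac14\norm z^2\Pi_n)$ pointwise in $z$, and recognizes the right side as the characteristic function of $\sqrt{\Pi_n}\,g_{2n+1}$. It then needs the Fourier-to-Laplace lemma (a Gaussian Fourier-inversion argument) to conclude $\Pi_n\norm{g_{2n+1}}^2\le_{\Lt}\norm{C_{n+1}}^2$.

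You instead prove the Laplace-level inequality $\gamma_{2n+1}\norm{C_n}^2\le_{\Lt}\norm{C_{n+1}}^2$ first. This is exactly the intermediate inequality \eqref{eq:recurrence-laplace} already established in the proof of the inverse-moment recurrence. You apply the hypothesis only afterwards, via stability of $\le_{\Lt}$ under an independent multiplier and transitivity. Your ordering removes the need for the Fourier-to-Laplace lemma and makes the theorem a short corollary of the recurrence computation. The paper's ordering packages the step as a general Fourier-domination principle, at the cost of that extra lemma.

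\textbf{Two corrections.}

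First, you state the compression theorem backwards. It says $|\Phi_{n+1}(\xi)|\le\Phi_{n+1}(\norm\xi e_1)$: at fixed norm the characteristic function is \emph{maximized} in modulus on a coordinate axis. That is the direction you need, since you bound $\E e^{-s\norm{C_{n+1}}^2}=\E_\xi\Phi_{n+1}(\xi)\le\E_\xi|\Phi_{n+1}(\xi)|$ from above. A minimization statement would give the reverse order.

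Second, the step you flag as the main obstacle and ``delicate bookkeeping'' is immediate, and no $\operatorname{Gamma}(1,1)$ factor ever appears. The compressed coordinate enters through its characteristic function, not through $|H_n|^2$. Since $(C_{n+1})_1\stackrel d=H_n$ and $H_n\mid C_n\sim\CN(0,\norm{C_n}^2)$, one has $\Phi_{n+1}(re_1)=\E e^{-r^2\norm{C_n}^2/4}$ for $r\ge0$. Writing $\xi=2\sqrt s\,g$ with $g\sim\CN(0,I_{2n+1})$ independent, this gives
\[
\E e^{-s\norm{C_{n+1}}^2}\le\E_\xi\,\Phi_{n+1}(\norm\xi e_1)=\E\exp\bigl(-s\norm g^2\norm{C_n}^2\bigr),
\qquad \norm g^2\sim\operatorname{Gamma}(2n+1,1).
\]
This is the entire induction step, with no off-by-one issue.
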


The odd Gamma shapes $1,3,\ldots,2n-1$ record the dimensions of the
successive Gaussian cofactor vectors.  Besides recovering the inverse-moment
estimate needed for \cref{thm:main}, \cref{thm:intro-lt} yields an entire
family of fractional negative-moment bounds; see \cref{cor:fractional-moments}.

Our proof is close in spirit to the recent Gaussian permanent
anticoncentration argument of Koehler and Leung \cite{KoehlerLeung2026}: both
reduce a high-dimensional cofactor problem by Gaussian interpolation.  The
symmetric hafnian setting has the additional shared-edge term in
\eqref{eq:intro-two-vertex}; the positivity in \eqref{eq:intro-positive} is
what allows the compression to survive this extra coupling.  Concurrent work
of Zhao also considers shifted anticoncentration for Gaussian hafnians,
including the complex symmetric Gaussian ensemble \cite{ZhaoUniform2026}.

Finally, the independent symmetric Gaussian ensemble is directly relevant to
the maximally squeezed GBS model.  A theorem of Shou, Miller, and Galitski
shows that sufficiently small blocks of a circular orthogonal ensemble (COE),
after rescaling, are close to an independent symmetric complex Gaussian
matrix, both in total variation and, in a smaller range, by a pointwise
density comparison \cite{ShouMillerGalitski2025}.  Combining this hiding
result with \cref{thm:main} transfers the local $\varepsilon^2$ small-ball
behavior to COE hafnians.  This is a probabilistic ingredient in the GBS
hardness program; it does not by itself establish classical sampling
hardness, for which average-case hardness remains a separate requirement.

The rest of the paper is organized as follows.  In \cref{sec:reduction} we
reduce the density problem to an inverse moment of the cofactor vector.  In
\cref{sec:compression} we prove the two-vertex decomposition and the
coordinate-compression theorem.  The direct inverse-moment recurrence and
\cref{thm:main} are proved in \cref{sec:anticoncentration}.  The stronger
product-Gamma comparison is developed in \cref{sec:laplace}.  The GBS
application is given in \cref{sec:gbs}.

\section{From a Gaussian hafnian to one inverse moment}\label{sec:reduction}

We begin with the elementary expansion identity that drives the proof.

\begin{lemma}[One-vertex hafnian expansion]\label{lem:haf-expansion}
Let $A=(a_{ij})$ be a symmetric $2n\times 2n$ matrix.  Then
\begin{equation}\label{eq:haf-expansion}
  \haf(A)=\sum_{j=2}^{2n}a_{1j}\,
           \haf(A_{-\{1,j\}}).
\end{equation}
\end{lemma}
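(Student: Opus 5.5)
The plan is to prove \eqref{eq:haf-expansion} directly from the definition \eqref{eq:haf-def}, by sorting the perfect matchings of $\{1,\ldots,2n\}$ according to the partner of vertex $1$. Every $M\in\mathcal M_{2n}$ contains exactly one edge incident to $1$, namely $\{1,j\}$ for a unique $j\in\{2,\ldots,2n\}$. Hence $\mathcal M_{2n}$ is the disjoint union over $j$ of the sets
\[
\mathcal M_{2n}^{(j)}=\{M\in\mathcal M_{2n}:\{1,j\}\in M\}.
\]

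Next I will exhibit, for each fixed $j$, a bijection between $\mathcal M_{2n}^{(j)}$ and the perfect matchings of $\{1,\ldots,2n\}\setminus\{1,j\}$. It sends $M$ to $M\setminus\{\{1,j\}\}$, and its inverse adjoins the edge $\{1,j\}$. Under this bijection
\[
\prod_{\{i,k\}\in M}a_{ik}=a_{1j}\prod_{\{i,k\}\in M\setminus\{\{1,j\}\}}a_{ik}.
\]
Summing over $\mathcal M_{2n}^{(j)}$ therefore gives $a_{1j}$ times the perfect-matching sum on the remaining index set. By \eqref{eq:haf-def}, this sum is $\haf(A_{-\{1,j\}})$ once the $2n-2$ remaining indices are relabelled in increasing order.

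Two small points need checking. First, the hafnian is invariant under relabelling the index set, since matchings are transported bijectively and the entries correspond. Second, diagonal entries never appear, because matching edges join distinct vertices; the symmetry of $A$ makes $a_{ik}$ well defined for unordered pairs. Summing over $j=2,\ldots,2n$ then yields \eqref{eq:haf-expansion}. For $n=1$ the formula reads $\haf(A)=a_{12}\haf(\emptyset)$ with the convention $\haf(\emptyset)=1$. This identity is purely combinatorial, so I expect no real obstacle beyond stating the bijection and these conventions cleanly.
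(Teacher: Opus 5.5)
Your proposal is correct and follows the paper's proof: partition the perfect matchings by the partner $j$ of vertex $1$, then identify the matchings containing $\{1,j\}$ with the perfect matchings of $A_{-\{1,j\}}$, factoring out $a_{1j}$. Your remarks on relabelling invariance, the irrelevance of diagonal entries, and the $n=1$ convention $\haf(\varnothing)=1$ make explicit what the paper leaves implicit.
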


\begin{proof}
Every perfect matching pairs vertex $1$ with a unique vertex
$j\in\{2,\ldots,2n\}$.  Once the edge $\{1,j\}$ is fixed, the remaining
vertices contribute a perfect matching of the principal submatrix obtained by
deleting $1$ and $j$.  Summing over the possible partners $j$ gives
\eqref{eq:haf-expansion}.
\end{proof}

Let $B_{2n-1}$ be a symmetric $(2n-1)\times(2n-1)$ matrix whose entries above
the diagonal are independent $\CN(0,1)$ variables.  Define its hafnian
cofactor vector
\begin{equation}\label{eq:cofactor-def}
   C_n=\bigl((C_n)_1,\ldots,(C_n)_{2n-1}\bigr)^{\mathsf T},
   \qquad
   (C_n)_j:=\haf((B_{2n-1})_{-j}).
\end{equation}
We use the convention $\haf(\varnothing)=1$, so $C_1=(1)$.

\begin{lemma}[Conditional Gaussian representation]\label{lem:conditional-gaussian}
Let $g\sim\CN(0,I_{2n-1})$ be independent of $B_{2n-1}$.  If $G_{2n}$ is
obtained by adjoining to $B_{2n-1}$ a new vertex whose incident edge vector is
$g$, then
\begin{equation}\label{eq:H-C}
        H_n=g^{\mathsf T}C_n.
\end{equation}
Consequently, conditional on $C_n$,
\begin{equation}\label{eq:conditional-law}
        H_n\mid C_n\sim\CN(0,\norm{C_n}^2).
\end{equation}
\end{lemma}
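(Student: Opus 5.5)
The plan is to apply \cref{lem:haf-expansion} to $G_{2n}$, expanding along the newly adjoined vertex, and then to condition on $B_{2n-1}$. Label the new vertex as $0$ and the old vertices $1,\ldots,2n-1$. The incident edge weights are $g_j=(G_{2n})_{0j}$ for $1\le j\le 2n-1$. Expanding the hafnian along vertex $0$ gives
\[
 H_n=\sum_{j=1}^{2n-1} g_j\,\haf\bigl((G_{2n})_{-\{0,j\}}\bigr).
\]
Deleting vertices $0$ and $j$ from $G_{2n}$ leaves exactly the principal submatrix $(B_{2n-1})_{-j}$, so each summand's hafnian is $(C_n)_j$ by \eqref{eq:cofactor-def}. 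This yields $H_n=\sum_j g_j (C_n)_j=g^{\mathsf T}C_n$, which is \eqref{eq:H-C}.

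Before using this identity we should confirm that the matrix built this way has the law of $G_{2n}$. The above-diagonal entries of $B_{2n-1}$ are independent $\CN(0,1)$, and $g$ is an independent $\CN(0,I_{2n-1})$ vector. Together these give a family of independent $\CN(0,1)$ entries indexed by all pairs $\{i,j\}$ of $2n$ vertices. The diagonal can be set arbitrarily, since it plays no role in the hafnian. Hence the adjoined matrix is equal in law to $G_{2n}$, and $H_n$ is the quantity defined in \eqref{eq:basic-defs}.

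For \eqref{eq:conditional-law}, condition on $B_{2n-1}$, which fixes the deterministic vector $c=C_n$. Because $g$ is independent of $B_{2n-1}$, its conditional law is still $\CN(0,I_{2n-1})$. A fixed linear functional $g^{\mathsf T}c=\sum_j c_j g_j$ of a standard circular complex Gaussian vector is circular complex Gaussian. Its variance is $\E|g^{\mathsf T}c|^2=\sum_j|c_j|^2=\norm{c}^2$, using $\E g_j\overline{g_k}=\delta_{jk}$. It also satisfies $\E (g^{\mathsf T}c)^2=\sum c_jc_k\E g_jg_k=0$, which is the circularity condition. So $H_n\mid B_{2n-1}\sim\CN(0,\norm{C_n}^2)$.

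This conditional law depends on $B_{2n-1}$ only through $C_n$. By the tower property, the same conditional law therefore holds given $C_n$ alone. I expect no real obstacle here. The only point needing care is the bookkeeping that identifies $(G_{2n})_{-\{0,j\}}$ with $(B_{2n-1})_{-j}$. The case $n=1$, where $C_1=(1)$ and $H_1=g_1$, is consistent with the convention $\haf(\varnothing)=1$.
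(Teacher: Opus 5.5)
Your proof is correct and follows the paper's argument: the one-vertex expansion at the adjoined vertex gives $H_n=g^{\mathsf T}C_n$, and since $g$ is independent of $B_{2n-1}$, the conditional law is circular Gaussian with variance $\norm{C_n}^2$. Your extra checks only make explicit what the paper leaves implicit: that the adjoined matrix has the law of $G_{2n}$, that the pseudo-variance $\E(g^{\mathsf T}c)^2$ vanishes, and the passage from conditioning on $B_{2n-1}$ to conditioning on $C_n$.
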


\begin{proof}
The identity \eqref{eq:H-C} is \cref{lem:haf-expansion} applied at the new
vertex.  Conditional on $C_n$, the right-hand side is a complex linear
combination of independent standard circular Gaussians.  Its conditional
variance is
\[
  \E\bigl[|g^{\mathsf T}C_n|^2\mid C_n\bigr]
   =\sum_{j=1}^{2n-1}|(C_n)_j|^2
   =\norm{C_n}^2,
\]
which gives \eqref{eq:conditional-law}.
\end{proof}

The cofactor norm is nonzero almost surely.

\begin{lemma}[Almost sure positivity]\label{lem:positive-cofactor}
For every $n\geq1$,
\[
             \Prob(\norm{C_n}>0)=1.
\]
\end{lemma}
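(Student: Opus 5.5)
The plan is to argue that each coordinate $(C_n)_1=\haf((B_{2n-1})_{-1})$ is a nonzero polynomial in the Gaussian entries. Then $(C_n)_1\neq 0$ almost surely, which forces $\norm{C_n}>0$ almost surely. Write $N=\binom{2n-1}{2}$ for the number of independent entries above the diagonal of $B_{2n-1}$. The map sending these entries in $\C^{N}$ to $(C_n)_1$ is a polynomial, namely the hafnian of a symmetric $(2n-2)\times(2n-2)$ matrix built from them. For $n=1$ the convention $\haf(\varnothing)=1$ gives $C_1=(1)$, so there is nothing to prove, and we may take $n\ge 2$.

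The first step is to check that this polynomial is not identically zero. Each perfect matching $M$ of the $2n-2$ remaining vertices contributes the monomial $\prod_{\{i,j\}\in M}a_{ij}$. Distinct matchings use distinct edge sets, so they give distinct monomials, and every monomial has coefficient $1$. Hence there is no cancellation. Since $\mathcal M_{2n-2}$ is nonempty, the polynomial is nonzero. A concrete witness is the adjacency matrix of a fixed perfect matching, at which the hafnian equals $1$.

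The second step is the standard fact that the zero set of a nonzero polynomial on $\C^{N}\cong\R^{2N}$ has Lebesgue measure zero. This can be proved by induction on the number of variables using Fubini: for almost every choice of the other variables, the polynomial restricted to one variable is nonzero and so has finitely many roots. The entries have the joint law of independent $\CN(0,1)$ variables, which has a density with respect to Lebesgue measure on $\R^{2N}$. Therefore $\Prob((C_n)_1=0)=0$.

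Finally, $\norm{C_n}\ge |(C_n)_1|$, which gives $\Prob(\norm{C_n}>0)=1$. There is no real obstacle here. The only point needing care is the non-cancellation in the first step, and the distinct-monomial observation settles it.
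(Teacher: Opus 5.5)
Your proposal is correct and follows the paper's argument: for $n\ge2$ you show $(C_n)_1$ is a nonzero polynomial because each perfect matching contributes a coefficient-one monomial. You then use that a nonzero polynomial vanishes only on a Lebesgue-null set under an absolutely continuous law, and conclude via $\norm{C_n}\ge|(C_n)_1|$. You also make explicit two points the paper leaves implicit: that distinct matchings give distinct monomials, so nothing cancels, and the Fubini induction behind the measure-zero fact.
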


\begin{proof}
For $n=1$ this is immediate from $C_1=(1)$.  For $n\ge2$, each coordinate
$(C_n)_j$ is a nonzero polynomial in the independent Gaussian entries of
$B_{2n-1}$: any fixed perfect matching of the remaining $2n-2$ vertices
contributes a monomial with coefficient one.  A nonzero polynomial in
continuously distributed real variables vanishes on a set of Lebesgue measure
zero.  Hence, for instance, $(C_n)_1\ne0$ almost surely.
\end{proof}

\begin{proposition}[Gaussian-mixture density]\label{prop:mixture-density}
Let
\[
              V_n:=\norm{C_n}^2.
\]
Then $H_n$ has density
\begin{equation}\label{eq:mixture-density}
 p_{H_n}(z)
 =\E\left[
      \frac1{\pi V_n}\exp\!\left(-\frac{|z|^2}{V_n}\right)
        \right],\qquad z\in\C.
\end{equation}
In particular, $p_{H_n}$ is radial and nonincreasing in $|z|$, and
\begin{equation}\label{eq:density-origin}
   \norm{p_{H_n}}_\infty=p_{H_n}(0)
   =\frac1\pi\E V_n^{-1}.
\end{equation}
\end{proposition}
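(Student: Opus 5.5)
The plan is to condition on $C_n$ and average the conditional Gaussian densities, using \cref{lem:conditional-gaussian} and \cref{lem:positive-cofactor}. Since $g$ is independent of $B_{2n-1}$, and hence of $C_n$, \cref{lem:conditional-gaussian} says that, conditionally on $C_n$, $H_n$ is $\CN(0,V_n)$. By \cref{lem:positive-cofactor}, $V_n>0$ almost surely. So on a full-measure event the conditional law has density $z\mapsto (\pi V_n)^{-1}e^{-|z|^2/V_n}$ with respect to Lebesgue measure on $\C\cong\R^2$.

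To identify the unconditional density, take any Borel set $S\subseteq\C$ and write $\Prob(H_n\in S)=\E[\Prob(H_n\in S\mid C_n)]$. Inserting the conditional density gives
\[
\Prob(H_n\in S)=\E\int_S \frac{1}{\pi V_n}e^{-|z|^2/V_n}\dd z .
\]
The integrand is nonnegative and jointly measurable, so Tonelli's theorem lets us exchange $\E$ and $\int_S$. This shows that the function in \eqref{eq:mixture-density} is a density for $H_n$, possibly taking the value $+\infty$ at some points a priori.

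Next I establish radiality and monotonicity. For each fixed $v>0$, the map $r\mapsto (\pi v)^{-1}e^{-r^2/v}$ is nonincreasing in $r=|z|$. Averaging over $V_n$ preserves both the dependence on $|z|$ alone and the monotonicity, by monotonicity of expectation. Hence the pointwise supremum of $p_{H_n}$ is attained at $z=0$, where the formula reduces to $\pi^{-1}\E V_n^{-1}$. This gives \eqref{eq:density-origin} as an identity in $[0,\infty]$.

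The only delicate point is whether ``density'' and $\norm{\cdot}_\infty$ make sense when $\E V_n^{-1}$ could be infinite. I would handle this by stating the identity in $[0,\infty]$ at this stage. Finiteness is a separate matter, which the paper obtains later from the recurrence $A_{n+1}\le A_n/(2n)$. Even without finiteness, $p_{H_n}$ is finite for every $z\neq0$, since $v^{-1}e^{-r^2/v}\le (e r^2)^{-1}$. So the density is a genuine Lebesgue density that is finite off the origin. If $p_{H_n}(0)<\infty$, then the essential supremum equals $p_{H_n}(0)$ by continuity at $0$, which follows from monotone convergence as $|z|\downarrow0$. No further obstacle is expected.
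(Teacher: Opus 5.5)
Your proposal is correct and follows the same approach as the paper. You condition on $C_n$, use the conditional $\CN(0,V_n)$ law with $V_n>0$ almost surely, average the conditional densities via Tonelli, and read off radiality, monotonicity and the value $\frac1\pi\E V_n^{-1}$ at the origin. Your additional care is a valid refinement of details the paper leaves implicit: you treat the identity in $[0,\infty]$ before finiteness is available from \cref{prop:recurrence}, and you identify the essential supremum with $p_{H_n}(0)$ via monotone convergence.
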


\begin{proof}
By \cref{lem:conditional-gaussian,lem:positive-cofactor}, conditional on
$C_n$ the random variable $H_n$ has the density
\[
   \frac1{\pi V_n}\exp\!\left(-\frac{|z|^2}{V_n}\right).
\]
Averaging this conditional density gives \eqref{eq:mixture-density}.  Every
term in the mixture is radial and nonincreasing in $|z|$, so the same is true
of the mixture.  Setting $z=0$ gives \eqref{eq:density-origin}.
\end{proof}

We therefore define the single quantity that will control the entire
anticoncentration problem:
\begin{equation}\label{eq:An}
             A_n:=\E\frac1{\norm{C_n}^2}.
\end{equation}
The next section proves the compression theorem from which the recurrence for
$A_n$ follows.

\section{Two-vertex compression of hafnian cofactors}\label{sec:compression}

For $z,w\in\C^d$, write
\[
        (z,w)_\R:=\operatorname{Re}\sum_{j=1}^d\overline z_jw_j.
\]
The characteristic function of the cofactor vector is
\begin{equation}\label{eq:Phi-def}
        \Phi_n(z):=\E e^{i(z,C_n)_\R},
        \qquad z\in\C^{2n-1}.
\end{equation}
We will prove that at fixed Euclidean norm this characteristic function is
largest in magnitude along a coordinate axis.

\subsection{The two-vertex algebra}

Let $m=2n-1$ and expose vertices $1$ and $2$ of $B_m$.  With
$R=\{3,\ldots,m\}$, write
\begin{equation}\label{eq:block-B}
 B_m=
 \begin{pmatrix}
  *&a&x^{\mathsf T}\\
  a&*&y^{\mathsf T}\\
  x&y&D
 \end{pmatrix}.
\end{equation}
Here $a\sim\CN(0,1)$, $x$ and $y$ are independent standard complex Gaussian
vectors indexed by $R$, and $D$ is an independent symmetric Gaussian matrix.
For $r\in R$ put
\begin{equation}\label{eq:ell-def}
                 \ell_r:=\haf(D_{-r}).
\end{equation}

\begin{lemma}[Two-vertex cofactor decomposition]\label{lem:two-vertex}
With the notation above,
\begin{equation}\label{eq:first-two}
   (C_n)_1=y^{\mathsf T}\ell,
   \qquad
   (C_n)_2=x^{\mathsf T}\ell.
\end{equation}
For every $j\in R$,
\begin{equation}\label{eq:remaining-cofactor}
 (C_n)_j
 =a\ell_j+
   \sum_{\substack{r,s\in R\setminus\{j\}\\ r\ne s}}
      x_r y_s\,\haf(D_{-\{j,r,s\}}).
\end{equation}
Consequently, for arbitrary coefficients $w=(w_1,\ldots,w_m)\in\C^m$,
there is a matrix $T=T(D,w_R)$ such that
\begin{equation}\label{eq:bilinear-decomp}
  \sum_{j=1}^{m}w_j(C_n)_j
  =x^{\mathsf T}Ty+w_2x^{\mathsf T}\ell
   +w_1y^{\mathsf T}\ell+aq,
  \qquad
  q:=\sum_{j\in R}w_j\ell_j.
\end{equation}
\end{lemma}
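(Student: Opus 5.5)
The plan is to apply the one-vertex expansion \cref{lem:haf-expansion} repeatedly, always expanding at one of the exposed vertices $1$ or $2$ and reading off which edges can appear. Only the combinatorics of perfect matchings is involved; no probabilistic input is needed beyond the block structure \eqref{eq:block-B}.

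\emph{First two cofactors.} $(C_n)_1=\haf((B_m)_{-1})$ is the hafnian of the matrix on the vertex set $\{2\}\cup R$. We expand at vertex $2$. Its neighbours in $R$ carry weights $y_r$, and deleting $\{2,r\}$ leaves $D_{-r}$. This gives $(C_n)_1=\sum_{r\in R}y_r\haf(D_{-r})=y^{\mathsf T}\ell$. The same argument applied at vertex $1$ gives $(C_n)_2=x^{\mathsf T}\ell$.

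\emph{Remaining cofactors.} For $j\in R$, $(C_n)_j$ is the hafnian on the vertex set $\{1,2\}\cup(R\setminus\{j\})$. We expand at vertex $1$. If vertex $1$ is matched to vertex $2$, the factor is $a$ and the remainder is $\haf(D_{-j})=\ell_j$. If vertex $1$ is matched to some $r\in R\setminus\{j\}$, the factor is $x_r$, and we then expand the remaining hafnian at vertex $2$. Vertex $2$ must now be matched to some $s\in R\setminus\{j,r\}$ with factor $y_s$, leaving $\haf(D_{-\{j,r,s\}})$. Summing over both cases yields \eqref{eq:remaining-cofactor}.

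\emph{Linear combination.} Multiply each identity by $w_j$ and sum. The terms in $a$ collect into $a\sum_{j\in R}w_j\ell_j=aq$. The first two cofactors contribute $w_1y^{\mathsf T}\ell+w_2x^{\mathsf T}\ell$. For the double sum, define the matrix indexed by $R\times R$ by
\[
T_{rs}=\sum_{j\in R\setminus\{r,s\}}w_j\haf(D_{-\{j,r,s\}})\quad(r\ne s),\qquad T_{rr}=0.
\]
After interchanging the order of summation over $j$ and over $(r,s)$, the double sum equals $x^{\mathsf T}Ty$. By construction $T$ depends only on $D$ and $w_R$, which gives \eqref{eq:bilinear-decomp}.

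The only delicate point is the index bookkeeping. One must check that each perfect matching is counted exactly once, with $r\ne s$ and both different from $j$. One must also check the parity condition: $|R\setminus\{j,r,s\}|=m-5$ is even, so that the hafnians $\haf(D_{-\{j,r,s\}})$ and $\ell_r=\haf(D_{-r})$ are the hafnians of even-order matrices.
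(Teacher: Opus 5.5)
Your proposal is correct and follows essentially the paper's argument: the same case split on whether exposed vertex $1$ is matched to vertex $2$ or to some $r\in R$ (then vertex $2$ to some $s\ne r$), with the terms then collected by linearity. The only difference is that you write out $T_{rs}=\sum_{j\in R\setminus\{r,s\}}w_j\haf(D_{-\{j,r,s\}})$ explicitly, which the paper leaves implicit.
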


\begin{proof}
Deleting vertex $1$ leaves vertex $2$ together with $R$.  Expanding the
resulting hafnian at vertex $2$ gives
$(C_n)_1=\sum_{r\in R}y_r\ell_r$.  The second identity follows symmetrically.

Fix $j\in R$.  In a perfect matching contributing to
$\haf((B_m)_{-j})$, either vertices $1$ and $2$ are matched together, which
contributes $a\ell_j$, or vertex $1$ is matched to some
$r\in R\setminus\{j\}$ and vertex $2$ to a distinct
$s\in R\setminus\{j\}$.  In the latter case the remaining contribution is
$x_ry_s\haf(D_{-\{j,r,s\}})$.  This proves
\eqref{eq:remaining-cofactor}.  After multiplication by $w_j$ and summation
over $j$, all terms quadratic in $x$ and $y$ can be collected into a bilinear
form $x^{\mathsf T}Ty$, while the terms involving $a$ sum to $aq$.
\end{proof}

The last term in \eqref{eq:bilinear-decomp} is the only part not present in
the analogous rectangular permanent calculation.  Its Gaussian average is
nonnegative.

\begin{lemma}[Positive shared-edge averaging]\label{lem:shared-edge}
Conditioned on $D$, and hence on $q$,
\begin{equation}\label{eq:shared-edge}
        \E_a e^{i\operatorname{Re}(aq)}
        =e^{-|q|^2/4}\ge0.
\end{equation}
\end{lemma}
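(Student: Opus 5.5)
The plan is to reduce the claim to the characteristic function of a single standard circular complex Gaussian, using only that $a$ is independent of $D$ and hence of $q$. First I would justify the conditioning. By the block decomposition \eqref{eq:block-B}, $a$ is independent of $(x,y,D)$. The quantity $q=\sum_{j\in R}w_j\ell_j$ is a measurable function of $D$ and the fixed coefficients $w_R$. So conditioning on $D$ freezes $q$ at a deterministic complex number while leaving $a\sim\CN(0,1)$. It therefore suffices to show that for every fixed $q\in\C$,
\[
  \E e^{i\operatorname{Re}(aq)}=e^{-|q|^2/4},\qquad a\sim\CN(0,1).
\]

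Next I would write $a=u+iv$ with $u,v$ independent real $N(0,\tfrac12)$, since $\E|a|^2=1$ and circular symmetry split the variance equally. Likewise write $q=\alpha+i\beta$. Then
\[
  \operatorname{Re}(aq)=\alpha u-\beta v .
\]
This is a real centered Gaussian with variance $\tfrac12(\alpha^2+\beta^2)=\tfrac12|q|^2$. The real Gaussian characteristic function $\E e^{i\xi}=e^{-\operatorname{Var}(\xi)/2}$ then gives $e^{-|q|^2/4}$ directly. Alternatively, rotation invariance of $a$ lets me replace $q$ by $|q|$, so $\operatorname{Re}(aq)\stackrel{d}{=}|q|u$, and the same value follows.

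Nonnegativity is then immediate, because the right-hand side is the exponential of a real number. There is no real obstacle here. The only point needing care is the variance convention: each real coordinate of a standard $\CN(0,1)$ variable has variance $\tfrac12$, and this is what produces the constant $\tfrac14$ rather than $\tfrac12$.
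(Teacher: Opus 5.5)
Your proposal is correct and follows essentially the same route as the paper. The paper writes $a=(X+iY)/\sqrt2$, notes that $\operatorname{Re}(aq)$ is a centered real Gaussian of variance $|q|^2/2$, and applies the real Gaussian characteristic function, which is exactly your argument; you additionally spell out the independence of $a$ from $D$ that makes the conditioning legitimate, which the paper leaves implicit.
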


\begin{proof}
Write $a=(X+iY)/\sqrt2$ with $X,Y$ independent $N(0,1)$.  Then
$\operatorname{Re}(aq)$ is a centered real Gaussian of variance $|q|^2/2$.
The standard characteristic-function formula gives
$\E e^{itN}=e^{-t^2\sigma^2/2}$ for $N\sim N(0,\sigma^2)$, yielding
\eqref{eq:shared-edge}.
\end{proof}

\subsection{A Gaussian interpolation lemma}

The next elementary identity is the analytic input to the compression.  It is
the same Gaussian geometric interpolation used in the permanent setting
\cite{KoehlerLeung2026}.

\begin{lemma}[Gaussian geometric interpolation]\label{lem:interpolation}
Let $X,Y$ be independent standard Gaussian vectors in $\R^d$.  Let
$T\in\R^{d\times d}$, $L\in\R^{d\times k}$, and $u,v\in\R^k$.  Define
\[
 F_{T,L}(u,v)
 =\E_{X,Y}\exp i\bigl(
        X^{\mathsf T}TY+X^{\mathsf T}Lv+Y^{\mathsf T}Lu
                     \bigr).
\]
Then $F_{T,L}(u,0)$ and $F_{T,L}(0,v)$ are positive real numbers.  Moreover,
for $0\le\theta\le1$,
\begin{equation}\label{eq:gaussian-interpolation}
 \left|F_{T,L}(\sqrt\theta\,u,
                 \sqrt{1-\theta}\,v)\right|
 =F_{T,L}(u,0)^\theta F_{T,L}(0,v)^{1-\theta}.
\end{equation}
\end{lemma}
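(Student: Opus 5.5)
The plan is to integrate out one Gaussian vector exactly, which leaves a Gaussian integral in the other, and then read off the interpolation from the explicit formula. Conditional on $Y$, the exponent is linear in $X$: $X^{\mathsf T}(TY+Lv)+Y^{\mathsf T}Lu$. Using $\E e^{iX^{\mathsf T}b}=e^{-\norm{b}^2/2}$ for real $b$, we get
\[
F_{T,L}(u,v)=\E_Y \exp\Bigl(-\tfrac12\norm{TY+Lv}^2+iY^{\mathsf T}Lu\Bigr).
\]
Expanding, the real part of the exponent is the quadratic form $-\frac12 Y^{\mathsf T}T^{\mathsf T}TY-Y^{\mathsf T}T^{\mathsf T}Lv-\frac12\norm{Lv}^2$. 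Combined with the standard Gaussian weight $e^{-\norm{Y}^2/2}$, this gives a Gaussian integral with precision matrix $P:=I+T^{\mathsf T}T$, which is positive definite, and with complex linear term $Y^{\mathsf T}(iLu-T^{\mathsf T}Lv)$.

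Completing the square, I would evaluate
\[
F_{T,L}(u,v)=\det(P)^{-1/2}\exp\Bigl(\tfrac12 c^{\mathsf T}P^{-1}c-\tfrac12\norm{Lv}^2\Bigr),\qquad c:=iLu-T^{\mathsf T}Lv.
\]
The formula $\E_Y e^{Y^{\mathsf T}c-\frac12 Y^{\mathsf T}(P-I)Y}=\det P^{-1/2}e^{c^{\mathsf T}P^{-1}c/2}$ holds for complex $c$ by analytic continuation, since both sides are entire in $c$. Expanding $c^{\mathsf T}P^{-1}c$ gives three pieces:
\[
-u^{\mathsf T}L^{\mathsf T}P^{-1}Lu,\qquad v^{\mathsf T}L^{\mathsf T}TP^{-1}T^{\mathsf T}Lv,\qquad -2i\,u^{\mathsf T}L^{\mathsf T}P^{-1}T^{\mathsf T}Lv.
\]
Hence
\[
|F(u,v)|=\det P^{-1/2}\exp\bigl(-\tfrac12 Q_1(u)-\tfrac12 Q_2(v)\bigr),
\]
where $Q_1(u)=u^{\mathsf T}L^{\mathsf T}P^{-1}Lu$ and $Q_2(v)=v^{\mathsf T}L^{\mathsf T}(I-TP^{-1}T^{\mathsf T})Lv$. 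The mixed term is purely imaginary, so it contributes only a phase.

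At $v=0$ or $u=0$ the mixed term vanishes and $F$ is $\det P^{-1/2}$ times a real exponential, hence positive. For the identity \eqref{eq:gaussian-interpolation}, I would use that $Q_1$ and $Q_2$ are quadratic forms, so $Q_1(\sqrt\theta u)=\theta Q_1(u)$ and $Q_2(\sqrt{1-\theta}v)=(1-\theta)Q_2(v)$. Writing $\det P^{-1/2}=(\det P^{-1/2})^\theta(\det P^{-1/2})^{1-\theta}$, the modulus factors exactly as $F(u,0)^\theta F(0,v)^{1-\theta}$.

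The main obstacle is making sure the cross term is purely imaginary and that no hidden real cross term comes from $\norm{TY+Lv}^2$; completing the square handles both. One could check $I-TP^{-1}T^{\mathsf T}=(I+TT^{\mathsf T})^{-1}\succ0$ for symmetry of the picture, but the interpolation identity does not need it.
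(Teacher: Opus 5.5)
Your proposal is correct and follows essentially the same route as the paper: integrate out $X$, evaluate the remaining Gaussian integral in $Y$ with precision $I+T^{\mathsf T}T$ and complex linear term, and observe that the modulus splits into separate quadratic forms in $u$ and $v$, which scale by $\theta$ and $1-\theta$. The differences are cosmetic. You justify the complex-shift Gaussian formula by analytic continuation and track the purely imaginary cross term explicitly. The paper rewrites your $I-TP^{-1}T^{\mathsf T}$ as $(I+TT^{\mathsf T})^{-1}$, which, as you note, is not needed for the identity.
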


\begin{proof}
Put $p=Lv$, $q=Lu$, and $A=I+T^{\mathsf T}T$.  Averaging first over $X$,
\[
 F_{T,L}(u,v)
 =e^{-\norm p^2/2}
  \E_Y\exp\left(
   -\frac12Y^{\mathsf T}T^{\mathsf T}TY
   -p^{\mathsf T}TY+iq^{\mathsf T}Y
             \right).
\]
Using the standard Gaussian integral
\[
 \E_Z\exp\left(-\frac12 Z^{\mathsf T}QZ+h^{\mathsf T}Z\right)
 =\det(I+Q)^{-1/2}
   \exp\left(\frac12 h^{\mathsf T}(I+Q)^{-1}h\right)
\]
with $h=-T^{\mathsf T}p+iq$, and taking absolute values, gives
\begin{align}\label{eq:F-modulus}
 |F_{T,L}(u,v)|
  ={}&\det(A)^{-1/2}\nonumber\\
 &\times\exp\left(
   -\frac12p^{\mathsf T}(I+TT^{\mathsf T})^{-1}p
   -\frac12q^{\mathsf T}A^{-1}q
              \right).
\end{align}
Here we used
$I-T(I+T^{\mathsf T}T)^{-1}T^{\mathsf T}=(I+TT^{\mathsf T})^{-1}$.
Setting $u=0$ or $v=0$ shows that the endpoint values are positive.  Since
the two quadratic forms in \eqref{eq:F-modulus} scale respectively by
$1-\theta$ and $\theta$, \eqref{eq:gaussian-interpolation} follows.
\end{proof}

\begin{corollary}[Weighted interpolation]\label{cor:weighted}
Let $(T,L,W)$ be random with $W\ge0$, and assume the displayed expectations
are finite.  Then, for $0\le\theta\le1$,
\begin{align}\label{eq:weighted-interpolation}
 &\E\!\left[
 W\left|F_{T,L}(\sqrt\theta\,u,
                  \sqrt{1-\theta}\,v)\right|
           \right]\nonumber\\
 &\qquad\le
 \bigl(\E[W F_{T,L}(u,0)]\bigr)^\theta
 \bigl(\E[W F_{T,L}(0,v)]\bigr)^{1-\theta}.
\end{align}
\end{corollary}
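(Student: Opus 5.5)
The plan is to apply \cref{lem:interpolation} pointwise in $(T,L,W)$ and then average using Hölder's inequality with exponents $1/\theta$ and $1/(1-\theta)$.

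Fix a realization of $(T,L,W)$. By \cref{lem:interpolation},
\[
 W\left|F_{T,L}(\sqrt\theta\,u,\sqrt{1-\theta}\,v)\right|
 =W\,F_{T,L}(u,0)^\theta F_{T,L}(0,v)^{1-\theta}.
\]
Both endpoint values are positive reals, so every quantity here is nonnegative. Since $W\ge0$, we may write $W=W^\theta W^{1-\theta}$, which gives
\[
 W\left|F_{T,L}(\sqrt\theta\,u,\sqrt{1-\theta}\,v)\right|
 =\bigl(W F_{T,L}(u,0)\bigr)^\theta\bigl(W F_{T,L}(0,v)\bigr)^{1-\theta}.
\]

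Next, take expectations over $(T,L,W)$. Hölder's inequality with exponents $p=1/\theta$ and $p'=1/(1-\theta)$ applies to the nonnegative random variables $U_1=(WF_{T,L}(u,0))^\theta$ and $U_2=(WF_{T,L}(0,v))^{1-\theta}$. It yields $\E[U_1U_2]\le (\E U_1^{1/\theta})^\theta(\E U_2^{1/(1-\theta)})^{1-\theta}$, which is exactly \eqref{eq:weighted-interpolation}.

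The endpoint cases need separate treatment. For $\theta=0$ or $\theta=1$ the statement is just the identity from \cref{lem:interpolation} integrated against $W$, so Hölder's inequality is not needed there. The finiteness assumptions ensure that both sides are well defined.

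Measurability of $F_{T,L}$ in the random parameters is routine. It is in any case explicit from formula \eqref{eq:F-modulus}, which also shows $0<F_{T,L}\le1$ at the endpoints. For the same reason finiteness is in fact automatic whenever $\E W<\infty$.

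I expect no real obstacle in this argument. The only point needing care is that the pointwise identity must involve nonnegative factors. This is where the positivity of the endpoint values, and the nonnegativity of $W$ (which will come from the shared-edge factor in \cref{lem:shared-edge}), are essential.
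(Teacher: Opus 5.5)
Your proposal is correct and is essentially the paper's proof: apply \cref{lem:interpolation} pointwise to rewrite the integrand as $(WF_{T,L}(u,0))^\theta(WF_{T,L}(0,v))^{1-\theta}$, then use H\"older's inequality with exponents $1/\theta$ and $1/(1-\theta)$. The paper leaves two points implicit that you state correctly: the separate handling of $\theta\in\{0,1\}$, and the fact that finiteness is automatic when $\E W<\infty$, because $0<F_{T,L}\le 1$ at the endpoints.
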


\begin{proof}
By \cref{lem:interpolation}, the integrand equals
\[
 (WF_{T,L}(u,0))^\theta
 (WF_{T,L}(0,v))^{1-\theta}.
\]
Apply H\"older's inequality.
\end{proof}

\begin{remark}\label{rem:complex-realification}
The same statement applies to standard complex Gaussian vectors whenever the
phase is the real part of a complex bilinear expression.  Indeed, identify
$\C^d$ with $\R^{2d}$, write real and imaginary parts, and rescale by
$\sqrt2$ so that the resulting real Gaussian vectors are standard.  Complex
multiplication is represented by a real linear map, so all numerical factors
are absorbed into the matrices $T$ and $L$.
\end{remark}

\subsection{Coordinate compression}

\begin{proposition}[Hafnian cofactor compression]\label{prop:compression}
For every $n\ge2$ and $z\in\C^{2n-1}$,
\begin{equation}\label{eq:compression}
             |\Phi_n(z)|\leq \Phi_n(\norm z e_1).
\end{equation}
The quantity on the right is real and nonnegative.
\end{proposition}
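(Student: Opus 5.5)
The plan is to establish a two-coordinate compression inequality and then iterate it. The inequality says: for coordinates $j\neq k$ and any $z\in\C^{2n-1}$, replacing the pair $(z_j,z_k)$ by $(\sqrt{|z_j|^2+|z_k|^2},0)$ does not decrease $|\Phi_n|$, up to moving the surviving mass to a coordinate of our choice.

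\textbf{Step 1: reduction to coordinates $1,2$ and a fixed phase.} By exchangeability of the vertices of $B_{2n-1}$, $\Phi_n$ is invariant under permuting coordinates. By rotation invariance of $\CN(0,1)$ entries, $|\Phi_n|$ is also invariant under coordinatewise phase changes $z_j\mapsto e^{i\phi_j}z_j$. To see this, multiply the edges at vertex $j$ by a phase and compensate on the other vertices; concretely, the law of $B$ is invariant under $B\mapsto UBU$ with $U$ a diagonal unitary, and then $C_j\mapsto \overline{u_j}\prod_k u_k\, C_j$, which gives coordinatewise phase rotations up to a global phase. So it suffices to treat the pair $(1,2)$ with arbitrary moduli.

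\textbf{Step 2: two-vertex decomposition.} Write $w=\overline z$, so that $(z,C_n)_\R=\operatorname{Re}\sum_j w_jC_j$, and apply \cref{lem:two-vertex}:
\[
(z,C_n)_\R=\operatorname{Re}\bigl(x^{\mathsf T}Ty+w_2x^{\mathsf T}\ell+w_1y^{\mathsf T}\ell\bigr)+\operatorname{Re}(aq),
\]
where $T,\ell,q$ depend only on $(D,w_R)$. Averaging over $a$ via \cref{lem:shared-edge} produces the factor $W=e^{-|q|^2/4}\ge0$. After the realification of \cref{rem:complex-realification}, the $(x,y)$ average is $F_{T,L}(u,v)$, where $u$ encodes $w_1$ and $v$ encodes $w_2$ (times $\ell$, absorbed into $L$). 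Hence
\[
\Phi_n(z)=\E_D\bigl[W\,F_{T,L}(u,v)\bigr].
\]

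\textbf{Step 3: interpolation.} Set $r^2=|z_1|^2+|z_2|^2$ and $\theta=|z_1|^2/r^2$, so that $(w_1,w_2)=(\sqrt\theta\,r e^{i\alpha},\sqrt{1-\theta}\,r e^{i\beta})$. Applying \cref{cor:weighted} gives
\[
|\Phi_n(z)|\le\Phi_n(r e^{i\alpha},0,z_R)^\theta\,\Phi_n(0,r e^{i\beta},z_R)^{1-\theta}.
\]
Both endpoint values are nonnegative reals: they equal $\E[W F]$ with $F>0$. By Step 1 and the symmetry swapping vertices $1$ and $2$, each endpoint equals $|\Phi_n(z')|$, where $z'$ has $r$ in one coordinate. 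Hence $|\Phi_n(z)|\le\max$ of the two compressed values, and the compression merges two coordinates into one without decreasing $|\Phi_n|$.

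\textbf{Step 4: iteration.} Repeating the merge $2n-2$ times collapses $z$ to $\norm z e_1$ up to phase and permutation. This yields \eqref{eq:compression}, and realness and nonnegativity follow from the endpoint positivity in Step 3.

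\textbf{Main obstacle.} The delicate point is Step 2: checking that $T$, $L$ and $q$ genuinely do not depend on $w_1,w_2$, so that only $u,v$ carry the exposed coefficients and the weight $W$ is common to both endpoints. Alongside this, one must verify that the realified form matches the hypotheses of \cref{lem:interpolation}, with $x$ paired against $Lv$ and $y$ against $Lu$ through the same $L$. Integrability is not an issue, since $|F|\le1$ and $W\le1$.
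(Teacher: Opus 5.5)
Your proposal is correct and follows the paper's proof essentially step for step. It uses the two-vertex decomposition, averages the shared edge $a$ into the weight $e^{-|q|^2/4}$, realifies, applies the triangle inequality followed by \cref{cor:weighted}, bounds the geometric mean of the two nonnegative endpoints by their maximum, and iterates. The only difference is cosmetic: you normalize phases up front via $B\mapsto UBU$, and the leftover global phase there is removed by the same device with $U=uI$, which sends $C_n\mapsto u^{2n-2}C_n$. The paper instead keeps $u=r\,w_1/|w_1|$ and removes the last phase at the end, using the circular symmetry of $(C_n)_1\stackrel{d}{=}H_{n-1}$.
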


\begin{proof}
It is convenient to write $w_j=\overline z_j$, so that
$(z,C_n)_\R=\operatorname{Re}\sum_jw_j(C_n)_j$.  It is enough to show that
any two nonzero coordinates of $w$ can be compressed into one coordinate
without decreasing the upper bound for $|\Phi_n(z)|$ and while preserving
$\norm z$.

Relabel the two coordinates as $1$ and $2$ and use the decomposition
\eqref{eq:bilinear-decomp}.  Condition on $D$.  By
\cref{lem:shared-edge}, averaging the scalar edge $a$ contributes the
nonnegative weight
\[
              W(D)=e^{-|q|^2/4}.
\]
After identifying the complex Gaussian vectors with real Gaussian
vectors as in \cref{rem:complex-realification}, the conditional expectation
over $x,y$ is of the form $F_{T_D,L_D}(w_1,w_2)$.  The same real map $L_D$
occurs in both linear terms because both use the vector $\ell$.

Put
\[
 r=(|w_1|^2+|w_2|^2)^{1/2},
 \qquad
 \theta=\frac{|w_1|^2}{r^2}.
\]
When both coordinates are nonzero, define
\[
   u=r\frac{w_1}{|w_1|},
   \qquad
   v=r\frac{w_2}{|w_2|}.
\]
Thus $w_1=\sqrt\theta\,u$ and
$w_2=\sqrt{1-\theta}\,v$ in the realified variables.  By the triangle
inequality followed by \cref{cor:weighted},
\begin{align*}
 |\Phi_n(z)|
 &\le
 \E_D\left[W(D)|F_{T_D,L_D}(w_1,w_2)|\right]\\
 &\le
 \bigl(\E_D[W(D)F_{T_D,L_D}(u,0)]\bigr)^\theta
 \bigl(\E_D[W(D)F_{T_D,L_D}(0,v)]\bigr)^{1-\theta}.
\end{align*}
The two expectations on the last line are precisely the characteristic
functions obtained by replacing $(w_1,w_2)$ by $(u,0)$ and $(0,v)$,
respectively.  Both are nonnegative: conditionally on $D$ this follows from
$W(D)\ge0$ and the endpoint positivity in \cref{lem:interpolation}.  Their
weighted geometric mean is therefore at most their maximum.  Hence one of
the two replacements does not decrease the upper bound, preserves
$|w_1|^2+|w_2|^2$, and reduces the number of nonzero coordinates by one.

Iterating leaves a vector with a single nonzero coordinate of modulus
$\norm z$.  The coordinates of $C_n$ are exchangeable, so this coordinate
may be taken to be the first.  Moreover, $(C_n)_1$ has the same law as
$H_{n-1}$, which is circularly symmetric: multiplying all Gaussian edges
incident to a fixed vertex by a unit complex scalar preserves the matrix law
and multiplies every hafnian monomial by that scalar.  Hence the phase of the
remaining coordinate may be removed.  By \cref{lem:conditional-gaussian},
for $r\ge0$,
\[
 \Phi_n(re_1)
 =\E e^{i\operatorname{Re}(rH_{n-1})}
 =\E\exp\left(-\frac{r^2}{4}\norm{C_{n-1}}^2\right)\ge0.
\]
This proves \eqref{eq:compression}.
\end{proof}

\begin{remark}[Where symmetry enters]\label{rem:symmetry}
The factor $e^{-|q|^2/4}$ in \eqref{eq:shared-edge} is the only additional
piece created by the symmetric hafnian geometry.  Before averaging $a$, the
term $aq$ couples the two exposed coordinates to all remaining coefficients.
Gaussian averaging removes this phase and leaves a positive weight; H\"older's
inequality then preserves the one-sided compression.
\end{remark}

\section{The inverse-moment recurrence and anticoncentration}\label{sec:anticoncentration}

We now use \cref{prop:compression} to prove the main theorem directly,
without first introducing Laplace-transform order.

A standard complex Gaussian vector $g\sim\CN(0,I_d)$ satisfies
\begin{equation}\label{eq:gaussian-char-vector}
  \E_g e^{i(z,g)_\R}=e^{-\norm z^2/4},
  \qquad
  \norm g^2\sim\operatorname{Gamma}(d,1).
\end{equation}
Consequently, for every fixed $c\in\C^d$ and $s\ge0$,
\begin{equation}\label{eq:linearization}
 e^{-s\norm c^2}
 =\E_g e^{i(2\sqrt{s}\,g,c)_\R}.
\end{equation}
We also use
\begin{equation}\label{eq:gamma-inverse}
 \E\frac1{\norm g^2}=\frac1{d-1},\qquad d\ge2,
\end{equation}
which follows by integrating the $\operatorname{Gamma}(d,1)$ density.

\begin{proposition}[Inverse-moment recurrence]\label{prop:recurrence}
For every $n\ge1$,
\begin{equation}\label{eq:recurrence}
        A_{n+1}\leq\frac{A_n}{2n}.
\end{equation}
In particular, all the inverse moments $A_n$ are finite.
\end{proposition}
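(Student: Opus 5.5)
Write $V_{n+1}=\norm{C_{n+1}}^2$, where $C_{n+1}\in\C^{2n+1}$. I would express the inverse moment through the Laplace transform,
\[
A_{n+1}=\E V_{n+1}^{-1}=\int_0^\infty \E e^{-sV_{n+1}}\dd s,
\]
and then bound the integrand using \eqref{eq:linearization} together with \cref{prop:compression}.

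Let $g\sim\CN(0,I_{2n+1})$ be independent of $C_{n+1}$. By \eqref{eq:linearization} and Fubini,
\[
\E e^{-sV_{n+1}}=\E_g\Phi_{n+1}(2\sqrt s\,g).
\]
Since the left side is real, it is at most $\E_g|\Phi_{n+1}(2\sqrt s\,g)|$. \Cref{prop:compression} then gives the bound $\E_g\Phi_{n+1}(2\sqrt s\norm g e_1)$. Now $(C_{n+1})_1$ has the law of $H_n$, and $H_n\mid C_n\sim\CN(0,\norm{C_n}^2)$ by \cref{lem:conditional-gaussian}. Hence, for $r\ge0$,
\[
\Phi_{n+1}(re_1)=\E e^{-r^2\norm{C_n}^2/4},
\]
and with $r=2\sqrt s\norm g$ this yields
\[
\E e^{-sV_{n+1}}\le \E\exp\bigl(-s\,\norm g^2\,V_n\bigr),
\]
where $\norm g^2\sim\operatorname{Gamma}(2n+1,1)$ is independent of $V_n$. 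In Laplace order this says $\gamma_{2n+1}V_n\le_{\Lt}V_{n+1}$.

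Integrating over $s\in(0,\infty)$ by Tonelli, with all integrands nonnegative, gives
\[
A_{n+1}\le \E\frac{1}{\norm g^2 V_n}=\E\frac1{\norm g^2}\,\E V_n^{-1}=\frac{A_n}{2n}.
\]
The middle equality uses independence, and the last uses \eqref{eq:gamma-inverse} with $d=2n+1\ge 3$. This is the desired recurrence. Finiteness follows by induction from $A_1=1$, since $C_1=(1)$.

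The delicate step is applying \cref{prop:compression}, which is stated only for $n\ge2$. Here it is used for the index $n+1\ge2$, so it applies for every $n\ge1$. The remaining checks are routine: the Fubini exchange in the linearization is justified because the integrand has modulus $1$; the phase removal and identification of $\Phi_{n+1}(re_1)$ were already carried out in the proof of \cref{prop:compression}; and $V_n>0$ almost surely by \cref{lem:positive-cofactor}, so the identity $\int_0^\infty e^{-sV}\dd s=V^{-1}$ is legitimate. Note that the bound $1/\pi$ times $A_n$ then feeds into \cref{thm:main} via \eqref{eq:density-origin}.
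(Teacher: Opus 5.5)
Your proposal is correct and matches the paper's proof step for step. It uses the linearization \eqref{eq:linearization}, then \cref{prop:compression} at index $n+1\ge2$, then the endpoint identity $\Phi_{n+1}(re_1)=\E e^{-r^2\norm{C_n}^2/4}$, and finally Tonelli with $\E\norm{g}^{-2}=1/(2n)$; your remarks on Fubini and the almost-sure positivity of the cofactor norms are appropriate justifications of routine steps.
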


\begin{proof}
Let $g\sim\CN(0,I_{2n+1})$ be independent of $C_{n+1}$.  By
\eqref{eq:linearization},
\[
 \E e^{-s\norm{C_{n+1}}^2}
 =\E_g\Phi_{n+1}(2\sqrt{s}\,g).
\]
The left-hand side is real and nonnegative.  Hence, using
\cref{prop:compression},
\begin{align}
 \E e^{-s\norm{C_{n+1}}^2}
 &\leq \E_g\left|\Phi_{n+1}(2\sqrt{s}\,g)\right|\nonumber\\
 &\leq \E_g\Phi_{n+1}(2\sqrt{s}\,\norm g\,e_1).
 \label{eq:recurrence-laplace-1}
\end{align}
The first coordinate of $C_{n+1}$ has the same law as $H_n$.  Therefore, by
\cref{lem:conditional-gaussian}, for $r\ge0$,
\[
 \Phi_{n+1}(re_1)
 =\E\exp\left(-\frac{r^2}{4}\norm{C_n}^2\right).
\]
Substituting $r=2\sqrt{s}\,\norm g$ into
\eqref{eq:recurrence-laplace-1} gives
\begin{equation}\label{eq:recurrence-laplace}
 \E e^{-s\norm{C_{n+1}}^2}
 \leq
 \E e^{-s\norm g^2\norm{C_n}^2}.
\end{equation}
Now use $x^{-1}=\int_0^\infty e^{-sx}\,\dd s$ and Tonelli's theorem.  Since
all integrands are nonnegative,
\begin{align*}
 A_{n+1}
 &=\int_0^\infty \E e^{-s\norm{C_{n+1}}^2}\,\dd s\\
 &\leq
 \E\frac1{\norm g^2\norm{C_n}^2}
 =\E\norm g^{-2}\,A_n.
\end{align*}
The two factors are independent.  Since $g\in\C^{2n+1}$,
\eqref{eq:gamma-inverse} gives $\E\norm g^{-2}=1/(2n)$, proving
\eqref{eq:recurrence}.  The base case $A_1=1$ then also proves finiteness
inductively.
\end{proof}

\begin{corollary}[Inverse-moment bound]\label{cor:inverse-bound}
For every $n\ge1$,
\begin{equation}\label{eq:inverse-bound}
       \E\frac1{\norm{C_n}^2}
       \leq \frac1{2^{n-1}(n-1)!}.
\end{equation}
\end{corollary}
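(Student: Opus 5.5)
The bound follows by induction on $n$ from the base case $A_1=1$ together with the recurrence of \cref{prop:recurrence}. Recall that $C_1=(1)$ by the convention $\haf(\varnothing)=1$. Hence $\norm{C_1}^2=1$ and $A_1=1$. This matches the right-hand side of \eqref{eq:inverse-bound} at $n=1$, since $2^{0}\cdot 0!=1$.

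For the inductive step, assume $A_n\le [2^{n-1}(n-1)!]^{-1}$. By \cref{prop:recurrence},
\[
A_{n+1}\le \frac{A_n}{2n}\le \frac{1}{2n\cdot 2^{n-1}(n-1)!}=\frac{1}{2^{n}\,n!}.
\]
This is exactly \eqref{eq:inverse-bound} with $n$ replaced by $n+1$.

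Equivalently, unrolling the recurrence gives
\[
A_n\le \prod_{k=1}^{n-1}\frac{1}{2k}=\frac{1}{2^{n-1}(n-1)!}.
\]
Finiteness of every $A_n$ comes along automatically from the induction, as already noted at the end of \cref{prop:recurrence}.

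No step here is a real obstacle. All the substantive work, namely the compression in \cref{prop:compression} and the Laplace/Tonelli argument, is already packaged in \cref{prop:recurrence}. The only point to check is the base case convention $C_1=(1)$, which the paper fixes explicitly.
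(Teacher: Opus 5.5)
Your proof is correct and is the same as the paper's: the base case $A_1=1$ from $C_1=(1)$, followed by iterating the recurrence $A_{n+1}\le A_n/(2n)$ of \cref{prop:recurrence}, which gives $A_n\le 1/(2\cdot4\cdots(2n-2))=1/(2^{n-1}(n-1)!)$.
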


\begin{proof}
Since $C_1=(1)$, $A_1=1$.  Iterating \cref{prop:recurrence} gives
\[
 A_n\leq\frac1{2\cdot4\cdots(2n-2)}
      =\frac1{2^{n-1}(n-1)!}.
\]
\end{proof}

We next record the normalization.

\begin{lemma}[Exact second moment]\label{lem:second-moment}
For every $n\ge1$,
\begin{equation}\label{eq:second-moment}
                \E|H_n|^2=(2n-1)!!.
\end{equation}
\end{lemma}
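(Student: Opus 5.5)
The plan is to prove \eqref{eq:second-moment} by induction on $n$, using the conditional Gaussian representation of \cref{lem:conditional-gaussian}. For $n=1$ we have $H_1=g_{12}$ with $g_{12}\sim\CN(0,1)$, so $\E|H_1|^2=1=1!!$. For the inductive step, \eqref{eq:conditional-law} gives
\[
\E|H_n|^2=\E\,\E\bigl[|g^{\mathsf T}C_n|^2\mid C_n\bigr]=\E\norm{C_n}^2=\sum_{j=1}^{2n-1}\E|(C_n)_j|^2 .
\]
Each coordinate $(C_n)_j=\haf((B_{2n-1})_{-j})$ is the hafnian of a $(2n-2)\times(2n-2)$ symmetric matrix with independent $\CN(0,1)$ entries above the diagonal. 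It therefore has the law of $H_{n-1}$, and by the induction hypothesis $\E|(C_n)_j|^2=(2n-3)!!$. Summing the $2n-1$ equal terms gives $(2n-1)(2n-3)!!=(2n-1)!!$.

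As a cross-check, or as an alternative direct proof, one can expand $|H_n|^2=\sum_{M,M'}\prod_{e\in M}a_e\prod_{e'\in M'}\overline{a_{e'}}$. Circular symmetry and independence give $\E[a_e\overline{a_{e'}}]=\delta_{ee'}$ and kill every mixed term. Hence $\E\prod_{e\in M}a_e\overline{\prod_{e\in M'}a_e}=\mathbf 1[M=M']$, since all edges of a matching are distinct and $\E|a|^2=1$. It follows that $\E|H_n|^2=|\mathcal M_{2n}|=(2n-1)!!$.

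The only point needing care is integrability, which lets us use the tower property and exchange expectation with the finite sum. This is immediate because $H_n$ is a polynomial in Gaussian variables, so all its moments are finite. No real obstacle is expected.
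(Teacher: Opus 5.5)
Your proof is correct, but your main argument is not the one the paper uses. The paper's proof is what you call the cross-check. It expands $H_n=\sum_M X_M$ over perfect matchings and shows that the monomials are orthonormal in $L^2$. For $M\ne M'$, some edge $e\in M\setminus M'$ occurs exactly once and unconjugated, so by independence the expectation contains the factor $\E a_e=0$. That factorization over distinct edges is the precise mechanism; circular symmetry is not needed.

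Your primary route instead iterates the one-vertex expansion of \cref{lem:conditional-gaussian}: $\E|H_n|^2=\E\norm{C_n}^2=(2n-1)\,\E|H_{n-1}|^2$. This uses the fact that each cofactor is the hafnian of a principal $(2n-2)\times(2n-2)$ block and therefore has the law of $H_{n-1}$.

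What each approach buys:
\begin{itemize}
\item The direct expansion is self-contained and does not depend on the earlier lemmas. It reads off $(2n-1)!!$ at once as the number of perfect matchings.
\item Your induction reuses the machinery that drives the rest of the paper. It explains the factor $2n-1$ as the dimension of the cofactor vector, which is the same count that produces the Gamma shapes $2j-1$ in \cref{thm:cofactor-lt}. It also gives $\E\norm{C_n}^2=(2n-1)!!=\E\Pi_n$ as an intermediate step, so the domination $\Pi_n\leq_{\Lt}\norm{C_n}^2$ compares variables with equal means.
\end{itemize}

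Both arguments use only independence, centering and unit variance of the entries. Your appeal to the conditional Gaussian law \eqref{eq:conditional-law} is more than you need, since only the conditional second moment enters.
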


\begin{proof}
Expand
\[
 H_n=\sum_{M\in\mathcal M_{2n}}X_M,
 \qquad
 X_M:=\prod_{\{i,j\}\in M}(G_{2n})_{ij}.
\]
If $M\ne M'$, some centered Gaussian edge variable occurs in one monomial but
not the other, so independence and centering imply
$\E[X_M\overline{X_{M'}}]=0$.  For $M=M'$, every edge has second moment one,
so $\E|X_M|^2=1$.  There are $(2n-1)!!$ perfect matchings.
\end{proof}

\begin{proof}[Proof of \cref{thm:main}]
By \cref{prop:mixture-density,cor:inverse-bound},
\begin{equation}\label{eq:unnormalized-density}
 \norm{p_{H_n}}_\infty=p_{H_n}(0)
 \leq\frac1{\pi\,2^{n-1}(n-1)!}.
\end{equation}
By \cref{lem:second-moment}, $W_n=H_n/\sqrt{M_n}$ with
$M_n=(2n-1)!!$.  Scaling the complex plane by $1/\sqrt{M_n}$ multiplies a
two-dimensional density by $M_n$, hence
\[
 \norm{p_n}_\infty
 \leq\frac1\pi\frac{(2n-1)!!}{2^{n-1}(n-1)!}
 =\frac{B_n}{\pi}.
\]
Radial monotonicity is preserved by this scaling.  Integrating the density
over a disk of radius $\varepsilon$ centered at any $z\in\C$ gives
\[
 \Prob(|W_n-z|\leq\varepsilon)
 \leq\pi\varepsilon^2\norm{p_n}_\infty
 \leq B_n\varepsilon^2.
\]
Finally,
\[
 B_n=\frac{2n}{4^n}\binom{2n}{n}
 \leq 2\sqrt{\frac n\pi},
\]
by the standard Wallis bound
$\binom{2n}{n}\leq4^n/\sqrt{\pi n}$.
\end{proof}

\section{A stronger product-Gamma comparison}\label{sec:laplace}

The preceding proof used compression only through one inverse moment.  We now
show that the same mechanism yields a full Laplace-transform comparison.

For nonnegative random variables $U,V$, write
\begin{equation}\label{eq:lt-order-def}
 U\leq_{\Lt}V
 \quad\Longleftrightarrow\quad
 \E e^{-sU}\geq\E e^{-sV}
 \quad\text{for every }s\ge0.
\end{equation}
The following Fourier-to-Laplace implication is standard and is included for
completeness.

\begin{lemma}[Fourier-to-Laplace comparison]\label{lem:fourier-laplace}
Let $U,V$ be random vectors in $\C^d$.  Suppose
\begin{equation}\label{eq:fourier-domination}
  \operatorname{Re}\E e^{i(z,U)_\R}
  \leq
  \operatorname{Re}\E e^{i(z,V)_\R}
  \qquad\text{for all }z\in\C^d.
\end{equation}
Then
\begin{equation}\label{eq:norm-lt}
                \norm V^2\leq_{\Lt}\norm U^2.
\end{equation}
\end{lemma}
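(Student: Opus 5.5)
The plan is to linearize each Laplace transform through an independent Gaussian, exactly as in \eqref{eq:linearization}, and then integrate the hypothesis \eqref{eq:fourier-domination} against the Gaussian law. Let $g\sim\CN(0,I_d)$ be independent of $U$ and $V$, and fix $s\ge0$. By \eqref{eq:linearization}, applied conditionally on $U$ and using Fubini (the integrand is bounded by $1$),
\[
 \E e^{-s\norm U^2}
 =\E_U\E_g e^{i(2\sqrt s\,g,U)_\R}
 =\E_g\,\E_U e^{i(2\sqrt s\,g,U)_\R},
\]
and the analogous identity holds for $V$.

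The left-hand sides are real. Taking real parts therefore gives
\[
 \E e^{-s\norm U^2}=\E_g\operatorname{Re}\E_U e^{i(2\sqrt s\,g,U)_\R}.
\]
Apply \eqref{eq:fourier-domination} pointwise at $z=2\sqrt s\,g$ for each realization of $g$. Integrating over $g$ then yields
\[
 \E e^{-s\norm U^2}\le \E_g\operatorname{Re}\E_V e^{i(2\sqrt s\,g,V)_\R}=\E e^{-s\norm V^2}.
\]
This inequality holds for every $s\ge0$, which by definition \eqref{eq:lt-order-def} is the statement $\norm V^2\le_{\Lt}\norm U^2$.

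The only step needing any care is the measurability and integrability behind the exchange of $\E_g$ with $\E_U$ and with the real part. Both are harmless because all integrands have modulus at most $1$ and the characteristic function $z\mapsto\E e^{i(z,U)_\R}$ is continuous, so composing it with $g$ gives a bounded measurable function. The other point to check is the direction of the order, and it comes out consistent: pointwise Fourier domination by $V$ gives the larger Laplace transform for $\norm V^2$, which is what makes $\norm V^2$ the smaller variable in the $\Lt$ order.
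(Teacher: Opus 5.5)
Your proof is correct and is essentially the paper's argument: the paper writes $e^{-s\norm{x}^2}$ as a Fourier integral against the positive Gaussian weight $e^{-\norm z^2/(4s)}$ and integrates the hypothesis against it. You express that same weight as the law of $2\sqrt{s}\,g$ via \eqref{eq:linearization}, which also covers $s=0$ and makes the Fubini step explicit.
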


\begin{proof}
Identify $\C^d$ with $\R^{2d}$.  For $s>0$, the Fourier transform of
$f_s(x)=e^{-s\norm x^2}$ is a positive Gaussian:
\[
 \widehat f_s(z)=\left(\frac\pi s\right)^d
                 e^{-\norm z^2/(4s)}.
\]
Fourier inversion and Fubini therefore give
\[
 \E e^{-s\norm U^2}
 =c_{d,s}\int_{\R^{2d}}e^{-\norm z^2/(4s)}
   \operatorname{Re}\E e^{i(z,U)_\R}\,\dd z,
\]
where $c_{d,s}>0$.  The same identity holds for $V$.  Since the Fourier
weight is nonnegative, \eqref{eq:fourier-domination} implies
\[
       \E e^{-s\norm U^2}\leq\E e^{-s\norm V^2},
\]
which is \eqref{eq:norm-lt}.
\end{proof}

Let $\gamma_1,\gamma_3,\gamma_5,\ldots$ be independent with
$\gamma_r\sim\operatorname{Gamma}(r,1)$, and define
\begin{equation}\label{eq:Pi-def}
      \Pi_1=1,
      \qquad
      \Pi_n=\prod_{j=2}^n\gamma_{2j-1}\quad(n\ge2).
\end{equation}

\begin{theorem}[Cofactor Laplace domination]\label{thm:cofactor-lt}
For every $n\ge1$,
\begin{equation}\label{eq:cofactor-lt}
                \Pi_n\leq_{\Lt}\norm{C_n}^2.
\end{equation}
Consequently,
\begin{equation}\label{eq:hafnian-lt}
       \prod_{j=1}^n\gamma_{2j-1}
       \leq_{\Lt}|H_n|^2.
\end{equation}
\end{theorem}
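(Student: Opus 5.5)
We argue by induction on $n$, upgrading the inverse-moment recurrence of \cref{prop:recurrence} into a full Laplace-order recurrence. The base case $n=1$ is trivial, since $C_1=(1)$ gives $\norm{C_1}^2=1=\Pi_1$.

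For the inductive step we reuse \eqref{eq:recurrence-laplace}, which was derived from \cref{prop:compression} and \cref{lem:conditional-gaussian} without using any inverse moments. It states that for every $s\ge0$,
\[
 \E e^{-s\norm{C_{n+1}}^2}\le \E e^{-s\norm g^2\norm{C_n}^2},
\]
where $g\sim\CN(0,I_{2n+1})$ is independent of $C_n$, and by \eqref{eq:gaussian-char-vector} $\norm g^2\sim\operatorname{Gamma}(2n+1,1)$. We may therefore write $\norm g^2=\gamma_{2n+1}$, taken independent of everything else.

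Next we condition on $\gamma_{2n+1}=t$ and apply the inductive hypothesis $\Pi_n\le_{\Lt}\norm{C_n}^2$ at the Laplace variable $st\ge0$:
\[
 \E e^{-st\norm{C_n}^2}\le \E e^{-st\Pi_n}.
\]
Integrating over $t$ against the $\operatorname{Gamma}(2n+1,1)$ law (Tonelli, everything nonnegative) gives
\[
 \E e^{-s\gamma_{2n+1}\norm{C_n}^2}\le \E e^{-s\gamma_{2n+1}\Pi_n}=\E e^{-s\Pi_{n+1}}.
\]
The last equality holds because $\gamma_{2n+1}$ is independent of $\Pi_n$ and $\Pi_{n+1}=\Pi_n\gamma_{2n+1}$. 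Combining with the previous display yields $\E e^{-s\norm{C_{n+1}}^2}\le\E e^{-s\Pi_{n+1}}$, which is \eqref{eq:cofactor-lt} at $n+1$. The only point needing care is that Laplace order is preserved under multiplication by an independent nonnegative factor, and the conditioning argument above is exactly that closure property.

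For \eqref{eq:hafnian-lt} we use the one-vertex representation of \cref{lem:conditional-gaussian}. Conditionally on $C_n$, $H_n=\norm{C_n}\,\xi$ in law with $\xi\sim\CN(0,1)$ independent, so $|H_n|^2\overset{d}{=}\norm{C_n}^2\gamma_1$ with $\gamma_1=|\xi|^2\sim\operatorname{Gamma}(1,1)$ independent of $C_n$. Applying the same conditioning-and-integration step with the factor $\gamma_1$ then transfers \eqref{eq:cofactor-lt} to
\[
 \Pi_n\gamma_1=\prod_{j=1}^n\gamma_{2j-1}\le_{\Lt}|H_n|^2.
\]

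No step here looks like a genuine obstacle: the analytic content already lives in \cref{prop:compression} and its consequence \eqref{eq:recurrence-laplace}. The main thing to check is that \eqref{eq:recurrence-laplace} holds for all $s\ge0$, and not only after integration in $s$, since that is what the induction requires. It does, because it was derived pointwise in $s$.
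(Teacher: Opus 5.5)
Your proof is correct and is essentially the paper's argument. Both use the same ingredients: the compression \cref{prop:compression}, the endpoint identity $\Phi_{n+1}(re_1)=\E e^{-r^2\norm{C_n}^2/4}$, Gaussian Fourier--Laplace duality, and closure of $\leq_{\Lt}$ under independent multiplication. The only difference is the order of the steps. You pass to Laplace transforms first by reusing the pointwise-in-$s$ inequality \eqref{eq:recurrence-laplace}, then apply the induction hypothesis conditionally on $\gamma_{2n+1}$. The paper instead inserts the induction hypothesis into the endpoint characteristic function and then invokes \cref{lem:fourier-laplace}, which is the same Gaussian linearization in analytic form.
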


\begin{proof}
We prove \eqref{eq:cofactor-lt} by induction.  For $n=1$,
$C_1=(1)$ and $\Pi_1=1$, so equality holds.

Assume \eqref{eq:cofactor-lt} for $n$.  The first coordinate of $C_{n+1}$
has the same distribution as $H_n$.  By \cref{lem:conditional-gaussian},
for $r\ge0$,
\begin{align}\label{eq:endpoint-char-lt}
 \Phi_{n+1}(re_1)
 &=\E\exp\left(-\frac{r^2}{4}\norm{C_n}^2\right)\nonumber\\
 &\leq\E\exp\left(-\frac{r^2}{4}\Pi_n\right),
\end{align}
where the inequality is the induction hypothesis.  By
\cref{prop:compression}, for $z\in\C^{2n+1}$,
\begin{align*}
 \operatorname{Re}\Phi_{n+1}(z)
 &\leq|\Phi_{n+1}(z)|\\
 &\leq\Phi_{n+1}(\norm z e_1)\\
 &\leq\E\exp\left(-\frac{\norm z^2}{4}\Pi_n\right).
\end{align*}
If $g_{2n+1}\sim\CN(0,I_{2n+1})$ is independent of $\Pi_n$, the last
quantity is the characteristic function of $\sqrt{\Pi_n}\,g_{2n+1}$ at
$z$.  Hence \cref{lem:fourier-laplace} gives
\[
 \Pi_n\norm{g_{2n+1}}^2
 \leq_{\Lt}\norm{C_{n+1}}^2.
\]
Since $\norm{g_{2n+1}}^2\sim\operatorname{Gamma}(2n+1,1)$ independently of
$\Pi_n$, the left-hand side has the law of $\Pi_{n+1}$.

Finally, \cref{lem:conditional-gaussian} implies the distributional identity
\begin{equation}\label{eq:H-square-mixture}
       |H_n|^2\stackrel d=\gamma_1\norm{C_n}^2,
\end{equation}
where $\gamma_1\sim\operatorname{Gamma}(1,1)$ is independent.  Laplace order
is preserved by multiplication by a common independent nonnegative random
variable: condition on that multiplier.  Multiplying
\eqref{eq:cofactor-lt} by an independent $\gamma_1$ proves
\eqref{eq:hafnian-lt}.
\end{proof}

The main inverse-moment estimate is only one consequence of this order.

\begin{corollary}[Fractional negative moments]\label{cor:fractional-moments}
For $n\ge2$ and $0<q<3$,
\begin{equation}\label{eq:cofactor-negative-q}
 \E\norm{C_n}^{-2q}
 \leq
 \prod_{j=2}^n
   \frac{\Gamma(2j-1-q)}{\Gamma(2j-1)}.
\end{equation}
For every $n\ge1$ and $0<q<1$,
\begin{equation}\label{eq:haf-negative-q}
 \E|H_n|^{-2q}
 \leq
 \prod_{j=1}^n
   \frac{\Gamma(2j-1-q)}{\Gamma(2j-1)}.
\end{equation}
\end{corollary}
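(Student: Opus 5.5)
The plan is to derive both bounds from the Laplace-order comparison of \cref{thm:cofactor-lt}. The tool is the integral representation of negative powers: for $x>0$ and $q>0$,
\[
 x^{-q}=\frac{1}{\Gamma(q)}\int_0^\infty s^{q-1}e^{-sx}\,\dd s .
\]
If $U\le_{\Lt}V$ with $U,V$ nonnegative, then $\E e^{-sV}\le \E e^{-sU}$ for all $s\ge0$. Integrating against the nonnegative weight $s^{q-1}/\Gamma(q)$ and using Tonelli then gives $\E V^{-q}\le \E U^{-q}$, with the convention that the right side may be $+\infty$. For $V$ we use that $\norm{C_n}>0$ almost surely (\cref{lem:positive-cofactor}) and, similarly, $H_n\neq0$ almost surely. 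So the only content beyond \cref{thm:cofactor-lt} is computing the negative moments of the product-Gamma variables and checking where they are finite.

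For \eqref{eq:cofactor-negative-q}, apply the above with $U=\Pi_n$ and $V=\norm{C_n}^2$. By independence,
\[
 \E\Pi_n^{-q}=\prod_{j=2}^n\E\gamma_{2j-1}^{-q}.
\]
For $\gamma_r\sim\operatorname{Gamma}(r,1)$ and $q<r$, integrating the density gives
\[
 \E\gamma_r^{-q}=\frac{\Gamma(r-q)}{\Gamma(r)} .
\]
The smallest shape in $\Pi_n$ is $r=3$ (from $j=2$), so every factor is finite precisely when $q<3$. This is the stated range, and it also explains the restriction $n\ge2$: for $n=1$ we have $\Pi_1=1$ and the bound is trivial.

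For \eqref{eq:haf-negative-q}, apply the same argument to \eqref{eq:hafnian-lt}, that is, with $U=\prod_{j=1}^n\gamma_{2j-1}$ and $V=|H_n|^2$. Now the factor $\gamma_1$ is present, and $\E\gamma_1^{-q}=\Gamma(1-q)$ is finite exactly for $q<1$. This yields the product from $j=1$ to $n$.

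The only step needing care is the interchange of integrals together with the fact that $\Gamma(q)^{-1}\int_0^\infty s^{q-1}e^{-sx}\,\dd s$ correctly represents $x^{-q}$ for $0<q$. Since all integrands are nonnegative, Tonelli applies without any integrability hypothesis, so I expect no real obstacle.
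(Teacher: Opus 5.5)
Your proposal is correct and follows essentially the same route as the paper. You use the Mellin representation $x^{-q}=\Gamma(q)^{-1}\int_0^\infty s^{q-1}e^{-sx}\,\dd s$ together with Tonelli to turn the Laplace order of \cref{thm:cofactor-lt} into reversed negative moments, and you then compute $\E\gamma_r^{-q}=\Gamma(r-q)/\Gamma(r)$ via independence, with the smallest shapes $3$ and $1$ fixing the ranges of $q$.
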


\begin{proof}
For $x>0$ and $q>0$,
\begin{equation}\label{eq:mellin-recip}
 x^{-q}=\frac1{\Gamma(q)}\int_0^\infty
             s^{q-1}e^{-sx}\,\dd s.
\end{equation}
Thus Laplace order reverses the corresponding negative moments whenever the
benchmark moment is finite.  Apply \cref{thm:cofactor-lt} and Tonelli to get
\[
 \E\norm{C_n}^{-2q}\leq\E\Pi_n^{-q}.
\]
For $\gamma_r\sim\operatorname{Gamma}(r,1)$,
\[
       \E\gamma_r^{-q}
       =\frac{\Gamma(r-q)}{\Gamma(r)},\qquad 0<q<r.
\]
Independence gives \eqref{eq:cofactor-negative-q}; the smallest shape in
$\Pi_n$ is $3$.  The hafnian bound is identical, using
\eqref{eq:hafnian-lt}; now the smallest shape is $1$, so $q<1$.
\end{proof}

\begin{remark}
At $q=1$, \eqref{eq:cofactor-negative-q} reduces exactly to
\[
 \E\norm{C_n}^{-2}
 \leq\prod_{j=2}^n\frac1{2j-2}
 =\frac1{2^{n-1}(n-1)!},
\]
recovering \cref{cor:inverse-bound}.  Thus the direct recurrence in
\cref{sec:anticoncentration} is the $q=1$ shadow of the stronger product-Gamma
comparison.
\end{remark}

\section{Application to Gaussian boson sampling}\label{sec:gbs}

We now transfer \cref{thm:main} to the maximally squeezed GBS ensemble.  We
state only the random-matrix consequence needed here; the physical output
probabilities contain additional squeezing-dependent prefactors that do not
affect the present local statement.

Let $U$ be Haar distributed in $U(M)$.  Then $UU^{\mathsf T}$ has the circular
orthogonal ensemble (COE) distribution.  In the maximally squeezed setting,
the symmetric matrix entering collision-free GBS amplitudes is of this form,
and a fixed $N$-mode output pattern is represented by an $N\times N$ principal
block.  Let $A_{N,M}$ denote the upper-left $N\times N$ block of an $M\times M$
COE matrix.

Let $G_N^{\mathrm{sym}}$ be a symmetric complex Gaussian matrix whose
off-diagonal entries are independent $\CN(0,1)$ and whose diagonal entries
are independent $\CN(0,2)$.  The diagonal distribution is irrelevant to the
hafnian.  Shou, Miller, and Galitski prove that, for $N=o(\sqrt M)$,
\begin{equation}\label{eq:hiding-tv}
 d_{\mathrm{TV}}(\sqrt M A_{N,M},G_N^{\mathrm{sym}})
 =O\left(\frac{N}{\sqrt M}\right),
\end{equation}
and that if $N=o(M^{1/3})$, the corresponding matrix densities $f_{N,M}$ and
$g_N$ satisfy pointwise
\begin{equation}\label{eq:hiding-density}
 f_{N,M}(Z)
 \leq
 \left(1+O\left(\frac{N^3}{M}\right)\right)g_N(Z)
\end{equation}
for every symmetric complex matrix $Z$ \cite{ShouMillerGalitski2025}.

\begin{corollary}[COE hafnian small balls]\label{cor:coe}
Let $N=2n$.  If $n=o(M^{1/3})$, then uniformly in $z\in\C$ and
$\varepsilon>0$,
\begin{align}\label{eq:coe-pointwise}
 &\Prob\left(
  \left|M^{n/2}\haf(A_{2n,M})-z\right|
  \leq\varepsilon\sqrt{(2n-1)!!}
            \right)\nonumber\\
 &\qquad\leq
 \left(1+O\left(\frac{n^3}{M}\right)\right)
 B_n\varepsilon^2.
\end{align}
If $n=o(\sqrt M)$, then
\begin{align}\label{eq:coe-tv}
 &\Prob\left(
  \left|M^{n/2}\haf(A_{2n,M})-z\right|
  \leq\varepsilon\sqrt{(2n-1)!!}
            \right)\nonumber\\
 &\qquad\leq
 B_n\varepsilon^2+O\left(\frac n{\sqrt M}\right).
\end{align}
\end{corollary}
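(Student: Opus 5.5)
We derive both bounds by transferring \cref{thm:main} from the symmetric Gaussian ensemble $G_{2n}^{\mathrm{sym}}$ to the rescaled COE block $\sqrt M A_{2n,M}$. The first step is a homogeneity observation. The hafnian of a $2n\times 2n$ matrix is homogeneous of degree $n$, so
\[
\haf(\sqrt M A_{2n,M})=M^{n/2}\haf(A_{2n,M}).
\]
The diagonal does not enter the hafnian, so the event in \eqref{eq:coe-pointwise} is $\{Z\in S\}$, where $Z=\sqrt M A_{2n,M}$ and $S$ is the set of symmetric $2n\times 2n$ complex matrices with $\abs{\haf(Z)/\sqrt{(2n-1)!!}-z}\le\varepsilon$. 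Under $G_{2n}^{\mathrm{sym}}$, the upper-triangular off-diagonal entries are i.i.d.\ $\CN(0,1)$. Hence $\haf(G^{\mathrm{sym}}_{2n})$ has the law of $H_n$, and \cref{thm:main} gives $\Prob(G^{\mathrm{sym}}_{2n}\in S)\le B_n\varepsilon^2$ uniformly in $z$ and $\varepsilon$.

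For the pointwise bound, assume $n=o(M^{1/3})$, so that $N=2n$ satisfies $N=o(M^{1/3})$ and $N^3/M=O(n^3/M)$. We integrate \eqref{eq:hiding-density} over $S$ with respect to Lebesgue measure on the space of symmetric matrices:
\[
\Prob(Z\in S)=\int_S f_{2n,M}\le\bigl(1+O(n^3/M)\bigr)\int_S g_{2n}\le \bigl(1+O(n^3/M)\bigr)B_n\varepsilon^2 .
\]
Two checks are needed here. First, $S$ must be measurable; it is a closed set, being a sublevel set of a polynomial. Second, the $O(\cdot)$ in \eqref{eq:hiding-density} must be uniform in $Z$, and this is how the hiding theorem is stated. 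Consequently the constant does not depend on $z$ or $\varepsilon$.

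For the total-variation bound, assume $n=o(\sqrt M)$. By the definition of total variation distance,
\[
\abs{\Prob(Z\in S)-\Prob(G^{\mathrm{sym}}_{2n}\in S)}\le d_{\mathrm{TV}}\le O(2n/\sqrt M).
\]
Combining this with the Gaussian bound yields \eqref{eq:coe-tv}, again uniformly in $z$ and $\varepsilon$.

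The only point needing care is the density comparison: \eqref{eq:hiding-density} has to be read as a comparison of densities on the same space and with respect to the same reference measure. That means the parametrization by the upper triangle including the diagonal, since the COE block has a genuine diagonal. If that holds, the argument is a one-line integration. I do not expect further obstacles, because all the analytic work has already been done in \cref{thm:main}.
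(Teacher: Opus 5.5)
Your proposal is correct and follows the paper's proof: use homogeneity of the hafnian, integrate the pointwise density comparison over the sublevel set for the first bound, and apply the total-variation bound to the same event (equivalently, to the pushforward under the hafnian map) for the second, each time finishing with \cref{thm:main}. One cosmetic slip: with $Z=\sqrt M A_{2n,M}$ the event is $\{\abs{\haf(Z)-z}\le\varepsilon\sqrt{(2n-1)!!}\}$, so your $S$ should be centered at $z/\sqrt{(2n-1)!!}$ rather than at $z$; this is harmless because the Gaussian bound is uniform in the center.
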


\begin{proof}
The hafnian of a $2n\times2n$ matrix is homogeneous of degree $n$, so
\[
       \haf(\sqrt M A_{2n,M})
       =M^{n/2}\haf(A_{2n,M}).
\]
For \eqref{eq:coe-pointwise}, integrate the density comparison
\eqref{eq:hiding-density} over the measurable set
\[
 \left\{Z:
   |\haf(Z)-z|\leq
   \varepsilon\sqrt{(2n-1)!!}
 \right\}
\]
and apply \cref{thm:main}.  For \eqref{eq:coe-tv}, total variation cannot
increase under a measurable map.  Push both matrix laws forward by the
hafnian map, use \eqref{eq:hiding-tv}, and again apply \cref{thm:main}.
\end{proof}

The pointwise hiding estimate therefore preserves the pure $\varepsilon^2$
local behavior in its range, while the total-variation statement gives the
same bound with an additive approximation error in the larger range.  This
should be interpreted as a probabilistic input to the GBS hardness program,
not as a complete hardness theorem.  Exact or approximate average-case
hardness for the corresponding random hafnians is a separate
complexity-theoretic ingredient, just as in the original boson-sampling
framework \cite{AaronsonArkhipov2013,DeshpandeEtAl2022}.

\section{Discussion}\label{sec:discussion}

The proof of \cref{thm:main} is driven by three structural identities.  The
one-vertex expansion
\[
              H_n=g^{\mathsf T}C_n
\]
turns a random hafnian into a one-dimensional Gaussian with random variance.
The two-vertex expansion
\[
 \sum_jw_j(C_n)_j
 =x^{\mathsf T}Ty+w_2x^{\mathsf T}\ell+w_1y^{\mathsf T}\ell+aq
\]
reduces a linear combination of cofactors to a Gaussian bilinear expression
plus one shared-edge term.  Finally,
\[
       \E_a e^{i\operatorname{Re}(aq)}=e^{-|q|^2/4}\ge0
\]
turns the apparent obstruction created by symmetry into a positive weight.
Once these identities are in place, the remaining argument consists of
Gaussian integration, H\"older interpolation, and Gamma calculus.

For the headline anticoncentration theorem, the cleanest consequence is the
inverse-moment recurrence $A_{n+1}\le A_n/(2n)$.  The same compression,
however, retains enough information to give the full product-Gamma
Laplace-transform domination of \cref{thm:cofactor-lt}.  The fractional
negative moments in \cref{cor:fractional-moments} illustrate that this
comparison is stronger than the single density estimate at the origin.
Further consequences of this stochastic comparison, and analogous local
bounds for other correlated hafnian ensembles, remain natural directions for
study.

\bibliographystyle{plain}
\bibliography{references}
\end{document}